\numberwithin{equation}{section} \theoremstyle{plain}
\newtheorem{theorem}{Theorem}[section]
\newtheorem{lemma}[theorem]{Lemma}
\newtheorem{corollary}[theorem]{Corollary}
\newtheorem{definition}[theorem]{Definition}
\theoremstyle{definition}
\theoremstyle{remark}
\newtheorem{remark}[theorem]{Remark}
\numberwithin{equation}{section}
\newcommand{\Det}{\operatorname{Det}}
\newcommand{\Dir}{\operatorname{D}}
\newcommand{\Dim}{\operatorname{dim}}
\newcommand{\Ker}{\operatorname{ker}}
\newcommand{\Spec}{\operatorname{Spec}}
\newcommand{\Tr}{\operatorname{Tr}}
\newcommand{\Imm}{\operatorname{Im}}
\newcommand{\ddet}{\operatorname{det}}
\newcommand{\Id}{\operatorname{Id}}
\begin{document}

\title[The zeta-determinants and anlaytic torsion of a metric mapping torus]
{The zeta-determinants and anlaytic torsion of a metric mapping torus}

\author{}

\address{}

\email{}

\author{Yoonweon Lee}

\address{Department of Mathematics Education, Inha University, Incheon, 22212, Korea}

\email{yoonweon@inha.ac.kr}

\subjclass[2020]{Primary: 58J52; Secondary: 58J50}
\keywords{zeta-determinant, analytic torsion, metric mapping torus, BFK-gluing formula, Dirichlet-to-Neumann operator}
\thanks{This work was supported by INHA UNIVERSITY Research Grant.}

\begin{abstract}
We use the BFK-gluing formula for zeta-determinants to compute the zeta-determinant and analytic torsion of a metric mapping torus induced from an isometry. As applications, we compute the zeta-determinants of the Laplacians defined on a Klein bottle ${\mathbb K}$ and some compact co-K\"ahler manifold ${\mathbb T}_{\varphi}$. We also show that a metric mapping torus and a Riemannian product manifold with a round circle have the same heat trace asymptotic expansions.  We finally compute the analytic torsion of a metric mapping torus for the Witten deformed Laplacian and recover the result of J. Marcsik in \cite{Ma}.
\end{abstract}
\maketitle

\section{Introduction}

\vspace{0.2 cm}

Let $M$ be a compact oriented $(m-1)$-dimensional manifold and $\varphi : M \rightarrow M$ be a diffeomorphism. $\varphi$ may be an orientation preserving or reversing map.
For $a > 0$, we define a mapping torus $M_{\varphi}$ by

\begin{eqnarray}   \label{E:1.1}
M_{\varphi} & = &  M \times [0, ~ a] / (x, 0) \sim (\varphi(x), a).
\end{eqnarray}

\noindent
Equivalently, we define a ${\mathbb Z}$-action on $M \times {\mathbb R}$ by

\begin{eqnarray}   \label{E:1.2}
{\mathbb Z} \times (M \times {\mathbb R}) \rightarrow M \times {\mathbb R}, \qquad m \cdot (x, u) = (\varphi^{m}(x), u + ma ).
\end{eqnarray}

\noindent
This action is properly discontinuous and $M_{\varphi}$ is defined to be the quotient space.
In fact, $M_{\varphi}$ is a fiber bundle over $S^{1}$ and every fiber bundle over $S^{1}$ is obtained in this way.
It is well known that $M_{\varphi}$ is a trivial bundle if and only if $\varphi$ is isotopic to the identity.
To endow a Riemannian metric on $M_{\varphi}$, we are going to consider a more specific case
called a metric mapping torus (\cite{BMO}). Metric mapping tori play important roles in the study of the co-symplectric and co-K\"ahler manifolds (\cite{BO}, \cite{Bl}, \cite{CDM}, \cite{Li}, \cite{MP}), which are odd-dimensional analogues of symplectic and K\"ahler manifolds.

\begin{definition}
Let $(M, g^{M})$ be a compact oriented $(m-1)$-dimensional Riemannian manifold and $\varphi : (M, g^{M}) \rightarrow (M, g^{M})$ be an isometry. A mapping torus $M_{\varphi}$ induced from the Riemannian product $[0, a] \times M$ is called a metric mapping torus.
\end{definition}

A metric mapping torus $M_{\varphi}$ has a natural Riemannian metric and in this paper a mapping torus means a metric mapping torus.
We denote by $\Omega^{q}(M_{\varphi})$ the space of smooth $q$-forms and by $\Delta_{M_{\varphi}}^{q}$ the Hodge Laplacian acting on $\Omega^{q}(M_{\varphi})$.
Equivalently, $\Omega^{q}(M_{\varphi})$ and $\Delta_{M_{\varphi}}^{q}$ are defined in the following way.
We define a map

\begin{eqnarray}    \label{E:1.3}
F : M \times {\mathbb R} \rightarrow M \times {\mathbb R}, \qquad F(x, u) = (\varphi(x), u + a)
\end{eqnarray}

\noindent
Then, $\Omega^{q}(M_{\varphi})$ is identified with

\begin{eqnarray}     \label{E:1.4}
\Omega_{\varphi}^{q}(M \times {\mathbb R}) & := & \{ \omega \in \Omega^{q}(M \times {\mathbb R}) \mid F^{\ast} \omega = \omega \},
\end{eqnarray}

\noindent
and $\Delta_{M_{\varphi}}^{q}$ is identified with $- \frac{d^{2}}{dt^{2}} + \Delta^{q-1}_{M}$ acting on $\Omega_{\varphi}^{q}(M \times {\mathbb R})$.
The zeta function $\zeta_{\Delta_{M_{\varphi}}^{q}}(s)$ associated to $\Delta_{M_{\varphi}}^{q}$ is defined by

\begin{eqnarray}    \label{E:1.5}
\zeta_{\Delta_{M_{\varphi}}^{q}}(s) & = & \sum_{0 \neq \lambda_{j} \in \Spec(\Delta_{M_{\varphi}}^{q})} \lambda_{j}^{-s}
~ = ~ \frac{1}{\Gamma(s)} \int_{0}^{\infty} t^{s-1} \left( \Tr e^{-t \Delta_{M_{\varphi}}^{q} } - \Dim \Ker \Delta_{M_{\varphi}}^{q} \right) ~ dt,
\end{eqnarray}

\noindent
which is holomorphic for $\Re s > \frac{m}{2}$ and has a meromorphic continuation to the whole complex plane ${\mathbb C}$ having a regular value at $s=0$.
If $\Dim \Ker \Delta_{M_{\varphi}}^{q} = 0$, the zeta-determinant $\Det \Delta_{M_{\varphi}}^{q}$ is defined by

\begin{eqnarray}    \label{E:1.6}
\log \Det \Delta_{M_{\varphi}}^{q} & := & - \zeta_{\Delta_{M_{\varphi}}^{q}}^{\prime}(0).
\end{eqnarray}

\noindent
If $\Ker \Delta_{M_{\varphi}}^{q}$ is non-trivial, the modified zeta-determinant $\Det^{\ast} \Delta_{M_{\varphi}}^{q}$ is defined by the same way,
{\it i.e.} $\log \Det^{\ast} \Delta_{M_{\varphi}}^{q} ~ :=$  $- \zeta_{\Delta_{M_{\varphi}}^{q}}^{\prime}(0)$, but we use the upper star to distinguish it from the invertible case.
The analytic torsion $T(M_{\varphi})$ is defined by

\begin{eqnarray}     \label{E:1.7}
\log T(M_{\varphi}) & := & \frac{1}{2} \sum_{q=0}^{m} (-1)^{q+1} q \log \Det^{\ast} \Delta_{M_{\varphi}}^{q}.
\end{eqnarray}

If $\varphi = \Id$, it is easily seen that $M_{\Id} = M \times S^{1}(\frac{a}{2 \pi})$, where $S^{1}(r)$ is the round circle of radius $r$.
In this paper, we are going to use the BFK-gluing formula for zeta-determinants (\cite{BFK}, \cite{Fo}) to describe $\log \Det \Delta_{M_{\varphi}}^{q} - \log \Det \Delta_{M_{\Id}}^{q}$ by a Fredholm determinant of some trace class operator defined on $M$ and determined by $\varphi$. By using the product formula of the zeta-determinants given in Theorem 7.1 of \cite{FG},
we can eventually compute
$\log \Det \Delta_{M_{\varphi}}^{q}$. As applications, we compute the zeta-determinants of the scalar Laplacians defined on a Klein bottle ${\mathbb K}$ and some non-product compact co-K\"ahler manifold ${\mathbb T}_{\varphi}$ given in \cite{CDM}. We also prove that a metric mapping torus and a Riemannian product manifold with a round circle have the same heat trace asymptotic expansions.
We finally apply this result to compute the analytic torsion of $M_{\varphi}$ for the Witten deformed Laplacian and recover the result given by J. Marcsik in \cite{Ma} and D. Burghelea and S. Haller in \cite{BH}. In fact, D. Burghelea and S. Haller proved a more general result
but we here present an elementary computation with a stronger assumption.

\vspace{0.3 cm}

\section{The zeta-determinants of Laplacians on a metric mapping torus}

\vspace{0.2 cm}
In this section we use the BKF-gluing formula to express $\log \Det \Delta_{M_{\varphi}}^{q} - \log \Det \Delta_{M_{\Id}}^{q}$ in terms of a Fredholm determinant of some trace class operator defined on $M$.
We begin with $\Delta_{M_{\varphi}}^{q} + \lambda + \mu$ rather than $\Delta_{M_{\varphi}}^{q}$ for $0 \leq \lambda$, $\mu \in {\mathbb R}$, where $\lambda$ is a fixed real number and $\mu$ is a parameter.
On the $S^{1}$-bundle $p : M_{\varphi} \rightarrow S^{1}$, let $\frac{d}{d \theta}$ and $d \theta$ be the standard unit vector field and one form on $S^{1}$ and let $\frac{\partial}{\partial u}$ and $du$ be the liftings of
$\frac{d}{d \theta}$ and $d \theta$ on $M_{\varphi}$.
Then, for $(x, t) \in M_{\varphi}$ a $q$-form $\omega$ on $M_{\varphi}$ can be expressed by

\begin{eqnarray}    \label{E:2.1}
\omega_{(x, t)} & = & \alpha_{(x, t)} + du \wedge \beta_{(x, t)},
\end{eqnarray}

\noindent
where $\alpha_{(x, t)} \in \wedge^{q}T_{(x, t)} M_{\varphi}$ and $\beta_{(x, t)} \in \wedge^{q-1}T_{(x, t)} M_{\varphi}$ with $\iota_{\frac{\partial}{\partial u}} \alpha_{(x, t)} = \iota_{\frac{\partial}{\partial u}} \beta_{(x, t)} = 0$. For simplicity, we write $x:= (x, 0)$ for $x \in M$.
We note that $M \times \{ 0 \}$ is a submanifold of $M_{\varphi}$ and that

\begin{eqnarray}    \label{E:2.2}
M_{\varphi} - M \times \{ 0 \} & = & M_{\varphi} - M \times \{ a \} ~ = ~ M \times (0, ~ a),
\end{eqnarray}

\noindent
and $M \times \{ 0 \}$ and $M \times \{ a \}$ are identified by $(x, 0) \sim (\varphi(x), a)$, where $\varphi : M \rightarrow M$ is an isometry.
This shows that for a smooth vector field $X$ on $M$ and $x \in M$, $X(x)$ is a vector in $T_{(x, 0)}M_{\varphi}$
and can be identified with $\varphi_{\ast} X(x) \in T_{(\varphi(x), a)}M_{\varphi}$.
For $\alpha \in \Omega^{q}(M)$ and $x \in M$, $\alpha_{x}$ is a $q$-form at $(x, 0) \in M_{\varphi}$, {\it i.e.}  $\alpha_{x} \in \wedge^{q} T^{\ast}_{(x, 0)}M_{\varphi}$.
For smooth vector fields $X_{1}, \cdots, X_{q}$ on $M$ and $x \in M$, the vectors $X_{1}(x), \cdots, X_{q}(x)$ in $T_{(x, 0)}M_{\varphi}$ are
identified with  $\varphi_{\ast} X_{1}(x), \cdots, \varphi_{\ast} X_{q}(x)$ in $T_{(\varphi(x), a)}M_{\varphi}$. Since

\begin{eqnarray}     \label{E:2.3}
\bigg((\varphi^{-1})^{\ast} \alpha_{x} \bigg) \left( \varphi_{\ast} X_{1}(x), \cdots, \varphi_{\ast} X_{q}(x) \right) & = & \alpha_{x} \left( X_{1}(x), \cdots, X_{q}(x) \right),
\end{eqnarray}

\noindent
it follows that $\alpha_{x} \in \wedge^{q} T^{\ast}_{(x, 0)}M_{\varphi}$ is identified with $(\varphi^{-1})^{\ast} \alpha_{x} \in \wedge^{q} T^{\ast}_{(\varphi(x), a)}M_{\varphi}$.
Hence, for $\alpha \in \Omega^{q}(M)$ and $\beta \in \Omega^{q-1}(M)$,
$~ \omega_{x} := \alpha_{x} + du \wedge \beta_{x} \in  \wedge^{q} T^{\ast}_{(x, 0)} M_{\varphi} ~$ is identified with

\begin{eqnarray}    \label{E:2.4}
(\varphi^{-1})^{\ast} \omega_{x} & = & (\varphi^{-1})^{\ast} \alpha_{x} ~ + ~ du \wedge (\varphi^{-1})^{\ast} \beta_{x} ~ \in ~
\wedge^{q} T^{\ast}_{(\varphi(x), a)} M_{\varphi}.
\end{eqnarray}

\vspace{0.2 cm}
For $u \in {\mathbb R}$, we identify $\Omega^{q}(M_{\varphi})|_{M \times \{ u \}}$ with $\Omega^{q}(M) \oplus \Omega^{q-1}(M)$.
We define ${\widetilde \Delta}_{M}^{q}$, $\left( {\widetilde \varphi^{\ast}} \right)_{q}$ and $\left(({\widetilde \varphi}^{-1})^{\ast}\right)_{q}$ as follows.

\begin{eqnarray}    \label{E:2.5}
& & {\widetilde \Delta}_{M}^{q}, ~ ({\widetilde \varphi}^{i})^{\ast}_{q} ~ : ~ \Omega^{q}(M) \oplus \Omega^{q-1}(M) ~ \rightarrow ~ \Omega^{q}(M) \oplus \Omega^{q-1}(M), \\
& & {\widetilde \Delta}_{M}^{q} = \left( \begin{array}{clcr} \Delta_{M}^{q} & 0 \\ 0 & \Delta_{M}^{q-1} \end{array} \right), \quad
({\widetilde \varphi}^{i})^{\ast}_{q} = \left( \begin{array}{clcr} (\varphi^{i})^{\ast}_{q} & 0 \\ 0 & (\varphi^{i})^{\ast}_{q-1} \end{array} \right), \quad i = \pm 1,       \nonumber
\end{eqnarray}

\noindent
where $(\varphi^{i})^{\ast}_{q} : \Omega^{q}(M) \rightarrow \Omega^{q}(M)$ is the pull-back of $q$-forms with respect to $\varphi$ or $\varphi^{-1}$.
For $(\alpha, \beta) \in \Omega^{q}(M) \oplus \Omega^{q-1}(M)$, we embed $(\alpha, \beta)$ to $\wedge^{q} T^{\ast}_{(x, 0)}M_{\varphi}$ and $\wedge^{q} T^{\ast}_{(\varphi(x), a)}M_{\varphi}$ by

\begin{eqnarray}    \label{E:2.6}
\omega_{x} := \alpha_{x} + du \wedge \beta_{x} \in \wedge^{q} T^{\ast}_{(x, 0)}M_{\varphi}, \quad
(\varphi^{-1})^{\ast} \omega_{x} = (\varphi^{-1})^{\ast} \alpha_{x} + du \wedge (\varphi^{-1})^{\ast} \beta_{x} \in \wedge^{q} T^{\ast}_{(\varphi(x), a)}M_{\varphi}.
\end{eqnarray}

\noindent
Let $\widetilde{\omega} \in \Omega^{q}(M \times [0, a])$ be an arbitrary extension of $\omega_{x}$ and $(\varphi^{-1})^{\ast} \omega_{x}$ to $M \times [0, a]$.
We define $\psi(x, u) \in \Omega^{q}(M \times [0, a])$ by

\begin{eqnarray*}
\psi(x, u) & = & \widetilde{\omega} ~ - ~ \left( - \frac{d^{2}}{du^{2}} + \left({\widetilde \Delta^{q}_{M}} + \lambda + \mu \right) \right)^{-1}_{\Dir}
\left( - \frac{d^{2}}{du^{2}} + \left({\widetilde \Delta^{q}_{M}} + \lambda + \mu \right) \right) \widetilde{\omega},
\end{eqnarray*}

\noindent
where "$\Dir$" stands for the Dirichlet boundary condition. Then $\psi(x, u)$ satisfies the following properties.

\begin{eqnarray}    \label{E:2.7}
\left( - \frac{d^{2}}{du^{2}} + \left({\widetilde \Delta^{q}_{M}} + \lambda + \mu \right) \right) \psi(u, x) = 0, \qquad \psi(x, 0) = \omega_{x}, \quad
\psi(\varphi(x), a) = (\varphi^{-1})^{\ast} \omega_{x}.
\end{eqnarray}

\begin{definition}
For $0 \leq \lambda$, $\mu \in {\mathbb R}$, we define the Dirichlet-to-Neumann operator

\begin{eqnarray*}
R^{q}_{\varphi}(\lambda + \mu) : \Omega^{q}(M) \oplus \Omega^{q-1}(M) ~ \rightarrow ~ \Omega^{q}(M) \oplus \Omega^{q-1}(M)
\end{eqnarray*}

\noindent
by

\begin{eqnarray*}
R^{q}_{\varphi}(\lambda + \mu) (\alpha, \beta) & = & \varphi^{\ast} \left( \frac{\partial}{\partial u}\psi(\varphi(x), a) \right) - \frac{\partial}{\partial u}\psi(x, 0) ~ \in ~ \Omega^{q}(M) \oplus \Omega^{q-1}(M),
\end{eqnarray*}

\noindent
where we identify $\frac{\partial}{\partial u}\psi(\varphi(x), a) \in \wedge^{q} T^{\ast}_{(\varphi(x), a)} M_{\varphi}$ with
$\varphi^{\ast} \left( \frac{\partial}{\partial u}\psi(\varphi(x), a) \right) \in \wedge^{q} T^{\ast}_{(x, 0)} M_{\varphi}$ and
identify  $A + du \wedge B$ with $(A, B) \in \Omega^{q}(M) \oplus \Omega^{q-1}(M)$.
\end{definition}

\vspace{0.2 cm}

It is well known that $R^{q}_{\varphi}(\lambda + \mu)$ is a non-negative elliptic pseudodifferential operator of order $1$ with the principal symbol $\sigma_{L}(R^{q}_{\varphi}(\lambda + \mu))(x, \xi) = 2 \sqrt{\parallel \xi \parallel^{2} + \lambda + \mu}$.
We now compute $R^{q}_{\varphi}(\lambda + \mu)$ precisely.
Let $\alpha \in \Omega^{q}(M)$ or $\alpha \in \Omega^{q-1}(M)$ with $\Delta_{M} \alpha = \nu^{2} \alpha$.
We suppose that $\alpha \in \Omega^{q}(M)$ and $\omega_{x} = \alpha_{x}$. Then, the solution to (\ref{E:2.7}) is given by

\begin{eqnarray}    \label{E:2.8}
\psi(x, u)  =
\frac{ e^{- \sqrt{\nu^{2} + \lambda + \mu}(u-a)} - e^{\sqrt{\nu^{2} + \lambda + \mu}(u-a)}}
{e^{a\sqrt{\nu^{2} + \lambda + \mu}} - e^{-a\sqrt{\nu^{2} + \lambda + \mu}}}  \alpha_{x}
  +  \frac{e^{u \sqrt{\nu^{2} + \lambda + \mu}} - e^{- u \sqrt{\nu^{2} + \lambda + \mu}}}
{e^{a\sqrt{\nu^{2} + \lambda + \mu}} - e^{-a\sqrt{\nu^{2} + \lambda + \mu}}}  \left((\varphi^{-1})^{\ast} \alpha \right)_{x}.
\end{eqnarray}

\noindent
Hence,

\begin{eqnarray}    \label{E:2.9}
\frac{\partial}{\partial u}\psi(x, u) ~ = ~
\end{eqnarray}
\begin{eqnarray*}
\sqrt{\nu^{2} + \lambda + \mu} ~ \left\{ ~ - ~
\frac{ e^{- \sqrt{\nu^{2} + \lambda + \mu}(u-a)} +  e^{\sqrt{\nu^{2} + \lambda + \mu}(u-a)}}
{e^{a\sqrt{\nu^{2} + \lambda + \mu}} - e^{-a\sqrt{\nu^{2} + \lambda + \mu}}} ~ \alpha_{x}  ~ + ~
\frac{e^{u \sqrt{\nu^{2} + \lambda + \mu}} + e^{- u \sqrt{\nu^{2} + \lambda + \mu}}}
{e^{a\sqrt{\nu^{2} + \lambda + \mu}} - e^{-a\sqrt{\nu^{2} + \lambda + \mu}}} ~ \left( (\varphi^{-1})^{\ast} \alpha \right)_{x} \right\},
\end{eqnarray*}

\noindent
which yields

\begin{eqnarray*}
& & \frac{\partial}{\partial u}\psi(x, 0) ~ = ~
\frac{\sqrt{\nu^{2} + \lambda + \mu}}{e^{a\sqrt{\nu^{2} + \lambda + \mu}} - e^{-a\sqrt{\nu^{2} + \lambda + \mu}}} ~ \left( - \left(e^{a\sqrt{\nu^{2} + \lambda + \mu}} + e^{-a\sqrt{\nu^{2} + \lambda + \mu}} \right) \alpha_{x} ~ + ~ 2 \left( (\varphi^{-1})^{\ast} \alpha \right)_{x} \right) , \\
& & \frac{\partial}{\partial u}\psi(\varphi(x), a)  =
\frac{\sqrt{\nu^{2} + \lambda + \mu}}{e^{a\sqrt{\nu^{2} + \lambda + \mu}} - e^{-a\sqrt{\nu^{2} + \lambda + \mu}}} \left( - 2 \alpha_{\varphi(x)}  +
 \left( e^{a\sqrt{\nu^{2} + \lambda + \mu}} + e^{-a\sqrt{\nu^{2} + \lambda + \mu}} \right)
 \left( (\varphi^{-1})^{\ast} \alpha \right)_{\varphi(x)} \right)  \\
&  & \hspace{1.5 cm} = ~ (\varphi^{-1})^{\ast} \left\{  \frac{\sqrt{\nu^{2} + \lambda + \mu}}{e^{a\sqrt{\nu^{2} + \lambda + \mu}} - e^{-a\sqrt{\nu^{2} + \lambda + \mu}}} \left( - 2 \left( \varphi^{\ast} \alpha \right)_{x}  +
 \left( e^{a\sqrt{\nu^{2} + \lambda + \mu}} + e^{-a\sqrt{\nu^{2} + \lambda + \mu}} \right)
 \alpha_{x} \right)  \right\}.
\end{eqnarray*}

\vspace{0.2 cm}
\noindent
This leads to the following result.

\begin{eqnarray}     \label{E:2.10}
& & R^{q}_{\varphi}(\lambda + \mu) ~ \alpha (x) ~ = ~ \varphi^{\ast} \left( \frac{\partial}{\partial u}\psi(\varphi(x), a) \right) - \frac{\partial}{\partial u}\psi(x, 0) \\
& = & \left(~ - ~\frac{2 \sqrt{\nu^{2} + \lambda + \mu}}{e^{a\sqrt{\nu^{2} + \lambda + \mu}} - e^{-a\sqrt{\nu^{2} + \lambda + \mu}}} ~ \left( \varphi^{\ast} \alpha \right)_{x} ~ + ~
\sqrt{\nu^{2} + \lambda + \mu} ~ \frac{e^{a\sqrt{\nu^{2} + \lambda + \mu}} + e^{-a\sqrt{\nu^{2} + \lambda + \mu}}}
{e^{a\sqrt{\nu^{2} + \lambda + \mu}} - e^{-a\sqrt{\nu^{2} + \lambda + \mu}}} ~ \alpha_{x} \right)       \nonumber  \\
& & - ~ \left( ~ - ~ \sqrt{\nu^{2} + \lambda + \mu} ~ \frac{e^{a\sqrt{\nu^{2} + \lambda + \mu}} + e^{-a\sqrt{\nu^{2} + \lambda + \mu}}}{e^{a\sqrt{\nu^{2} + \lambda + \mu}} - e^{-a\sqrt{\nu^{2} + \lambda + \mu}}} ~
\alpha_{x}
~ + ~ \frac{2 \sqrt{\nu^{2} + \lambda + \mu}}{e^{a\sqrt{\nu^{2} + \lambda + \mu}} - e^{-a\sqrt{\nu^{2} + \lambda + \mu}}} ~ \left( (\varphi^{-1})^{\ast} \alpha \right)_{x}   \right)      \nonumber  \\
& = & 2 \sqrt{\nu^{2} + \lambda + \mu} ~ \frac{e^{a\sqrt{\nu^{2} + \lambda + \mu}} - 1}{e^{a\sqrt{\nu^{2} + \lambda + \mu}} + 1} \left\{ \alpha_{x} +
\frac{2 e^{a\sqrt{\nu^{2} + \lambda + \mu}}}{\left( e^{a\sqrt{\nu^{2} + \lambda + \mu}} - 1 \right)^{2}}
\left( \alpha_{x} - \left( \frac{\left( \varphi^{\ast} \alpha \right)_{x} + \left( (\varphi^{-1})^{\ast} \alpha \right)_{x}}{2} \right) \right) \right\}.
  \nonumber
\end{eqnarray}

\vspace{0.2 cm}
\noindent
\begin{remark} \label{Remark:2.2}
If $\lambda = \mu =0$ and $\nu = 0$, then $\psi(x, u)$ and $R^{q}_{\varphi}(0) \alpha (x)$ are given by

\begin{eqnarray*}
\psi(x, u) & = & \frac{1}{a} \bigg( \left(( \varphi^{-1})^{\ast} \alpha\right)_{x} - \alpha_{x} \bigg)u + \alpha_{x}, \qquad
R^{q}_{\varphi}(0) \alpha (x) ~ = ~ \frac{2}{a} \left( \alpha_{x} - \frac{\left( \varphi^{\ast} \alpha \right)_{x} + \left( (\varphi^{-1})^{\ast} \alpha \right)_{x}}{2} \right).
\end{eqnarray*}
\end{remark}

\vspace{0.2 cm}
\noindent
The following lemma is well known (for example, Lemma 3.2 in \cite{KL1}).

\begin{lemma}   \label{Lemma:2.3}
For a positive definite elliptic pseudodifferential operator $A$ of positive order and a non-negative trace class operator $Q$, the following equality holds.
\begin{eqnarray*}
\log \Det A ( I + Q) & = & \log \Det A ~ + ~ \log \ddet_{Fr} (I + Q).
\end{eqnarray*}
\end{lemma}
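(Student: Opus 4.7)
The plan is to interpolate between $A$ and $A(I+Q)$ by introducing the one-parameter family $A_u := A(I+uQ)$ for $u \in [0,1]$, and to show that $u \mapsto \log\Det A_u$ and $u \mapsto \log \ddet_{Fr}(I+uQ)$ have the same derivative, from which the equality follows by integration from $u=0$ to $u=1$.

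First, I would verify that each $A_u$ is an elliptic pseudodifferential operator of the same positive order as $A$. Since $Q$ is non-negative and trace class, $I+uQ$ is a bounded, positive, invertible operator whose principal symbol equals $1$; hence the principal symbol of $A_u$ coincides with that of $A$, so $A_u$ admits an Agmon angle and $\log \Det A_u$ is defined, with $\log \Det A_0 = \log \Det A$.

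Next, I would apply the standard variation formula for zeta-determinants,
\[
\frac{d}{du}\log \Det A_u \;=\; \mathrm{FP}_{z=0}\, \Tr\bigl(\dot A_u \, A_u^{-z-1}\bigr),
\]
where $\dot A_u = AQ$ and $\mathrm{FP}$ denotes the finite part. A short computation gives
\[
\dot A_u \cdot A_u^{-1} \;=\; AQ\,(I+uQ)^{-1}\, A^{-1}.
\]
The key observation is that $Q(I+uQ)^{-1}$ is trace class (product of a trace class and a bounded operator), so its similarity transform by $A$ is trace class as well, making $\dot A_u A_u^{-z-1}$ trace class at $z=0$; no Wodzicki-type residue correction appears, and the finite part reduces to the ordinary trace. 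By cyclicity of the trace,
\[
\frac{d}{du}\log \Det A_u \;=\; \Tr\bigl(AQ(I+uQ)^{-1}A^{-1}\bigr) \;=\; \Tr\bigl(Q(I+uQ)^{-1}\bigr),
\]
which is precisely $\tfrac{d}{du}\log \ddet_{Fr}(I+uQ)$ by the standard derivative formula for Fredholm determinants. Therefore $\log \Det A_u - \log \ddet_{Fr}(I+uQ)$ is constant on $[0,1]$, and comparing values at $u=0$ and $u=1$ yields the claim.

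The main technical obstacle is to justify that no residue correction enters the variation formula. This hinges crucially on $Q$ being trace class rather than merely a pseudodifferential operator of negative order: if $Q$ were only of negative order, the meromorphic continuation of $\Tr(\dot A_u A_u^{-z-1})$ could acquire a simple pole at $z=0$ contributed by a Wodzicki residue of $\dot A_u A_u^{-1}$, producing the classical Kontsevich--Vishik multiplicative anomaly. Because $Q$ is trace class, this anomaly vanishes, and this vanishing is precisely the content of the lemma.
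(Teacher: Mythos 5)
The paper does not actually prove this lemma; it is quoted as ``well known'' with a pointer to Lemma 3.2 of \cite{KL1}, so there is no in-paper argument to compare against. Your variational proof is the standard route to such a statement (and almost certainly the one used in the cited reference): differentiate $\log\Det A(I+uQ)$ and $\log\ddet_{Fr}(I+uQ)$ in $u$ and match derivatives, with the trace-class hypothesis on $Q$ being exactly what removes any Wodzicki-residue/multiplicative-anomaly correction. That conceptual framing is right.

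Two steps are glossed over, and neither follows from ``$Q$ trace class'' alone. First, $A_u=A+uAQ$ need not be a pseudodifferential operator: a trace class $Q$ need not be pseudodifferential, and $AQ$ need not even be bounded since $A$ is unbounded. So the assertion that $A_u$ has the same principal symbol as $A$, hence a well-defined zeta-determinant, is not automatic; one must either define $\zeta_{A_u}$ via a contour integral and show $\Tr\bigl(A_u^{-s}-A^{-s}\bigr)$ is entire, or use that in all applications here $Q$ is a rapidly decaying function of $\widetilde\Delta^q_M$, hence smoothing. One should also note that $A(I+uQ)$ is not self-adjoint, but is similar to the positive operator $(I+uQ)^{1/2}A(I+uQ)^{1/2}$, so complex powers and the determinant make sense. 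Second, the claim that $AQ(I+uQ)^{-1}A^{-1}$ is trace class ``because it is a similarity transform of a trace class operator'' is false in general: conjugation by an unbounded operator does not preserve the trace ideal. The correct way to run your computation is at large $\re s$ through the resolvent, writing $(A_u-\lambda)^{-1}\dot A_u(A_u-\lambda)^{-1}=\bigl[(A_u-\lambda)^{-1}A\bigr]\,Q\,(A_u-\lambda)^{-1}$ with $(A_u-\lambda)^{-1}A=\bigl((I+uQ)-\lambda A^{-1}\bigr)^{-1}$ bounded and $Q(A_u-\lambda)^{-1}$ trace class; cyclicity of the trace (trace class against bounded) then yields $\Tr\bigl(Q(I+uQ)^{-1}\bigr)$ at $s=0$ without ever forming $AQA^{-1}$. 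With these repairs your argument is correct and is, in substance, the standard proof the paper is implicitly invoking.
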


\noindent
We have the following result.

\begin{lemma} \label{Lemma:2.4}
The Dirichlet-to-Neumann operator $ R^{q}_{\varphi}(\lambda + \mu) : \Omega^{q}(M) \oplus \Omega^{q-1}(M) \rightarrow \Omega^{q}(M) \oplus \Omega^{q-1}(M) $ is described as follows.

\begin{eqnarray*}
R^{q}_{\varphi}(\lambda + \mu)  & = &
\end{eqnarray*}
\begin{eqnarray*}
2 \sqrt{{\widetilde \Delta}_{M}^{q} + \lambda + \mu} ~ \frac{e^{a\sqrt{{\widetilde \Delta}_{M}^{q} + \lambda + \mu}} - \Id}{e^{a\sqrt{{\widetilde \Delta}_{M}^{q} + \lambda + \mu}} + \Id}
\left\{ \Id + \frac{2 e^{a\sqrt{{\widetilde \Delta}_{M}^{q} + \lambda + \mu}}}{\left( e^{a\sqrt{{\widetilde \Delta}_{M}^{q} + \lambda + \mu}} - \Id \right)^{2}}
\left( \Id - \frac{1}{2}  {\widetilde \varphi^{\ast}}_{q} - \frac{1}{2} ({\widetilde \varphi}^{-1})^{\ast}_{q} \right) \right\}.
\end{eqnarray*}

\noindent
In particular, if $\varphi = \Id$, then

\begin{eqnarray*}
R^{q}_{\Id}(\lambda + \mu) & =  & 2 \sqrt{{\widetilde \Delta}_{M}^{q} + \lambda + \mu} ~ \frac{e^{a\sqrt{{\widetilde \Delta}_{M}^{q} + \lambda + \mu}} - \Id}{e^{a\sqrt{{\widetilde \Delta}_{M}^{q} + \lambda + \mu}} + \Id} ~ = ~ 2 \sqrt{{\widetilde \Delta}_{M}^{q} + \lambda + \mu} ~ \left( \Id - \frac{2}{e^{a\sqrt{{\widetilde \Delta}_{M}^{q} + \lambda + \mu}} + \Id} \right),
\end{eqnarray*}

\noindent
which shows that for $\lambda + \mu > 0$,

\begin{eqnarray*}
\log \Det R^{q}_{\varphi}(\lambda + \mu) & = & \log \Det R^{q}_{\Id}(\lambda + \mu) \\
& & + ~ \log \ddet_{Fr} \left\{ \Id +
\frac{2 e^{a\sqrt{{\widetilde \Delta}_{M}^{q} + \lambda + \mu}}}{\left( e^{a\sqrt{{\widetilde \Delta}_{M}^{q} + \lambda + \mu}} - \Id \right)^{2}}
\left( \Id -  \frac{1}{2}  {\widetilde \varphi^{\ast}}_{q} - \frac{1}{2} ({\widetilde \varphi}^{-1})^{\ast}_{q} \right) \right\}.
\end{eqnarray*}
\end{lemma}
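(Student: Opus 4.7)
The plan is to upgrade the eigenform-level computation (\ref{E:2.10}) to a global operator identity via functional calculus, then read off the $\varphi=\Id$ special case by substitution, and finally invoke Lemma \ref{Lemma:2.3} to split the log-determinant.

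Since $\varphi$ is an isometry, $\varphi^{\ast}$ is a unitary operator on $L^2$-forms and commutes with the Hodge Laplacian, so ${\widetilde \varphi^{\ast}}_q$ and $({\widetilde \varphi}^{-1})^{\ast}_q$ commute with ${\widetilde \Delta}_M^q$ and hence with every Borel function of it. The operator ${\widetilde \Delta}_M^q$ has discrete spectrum with a complete orthonormal system of eigenforms in $\Omega^q(M)\oplus\Omega^{q-1}(M)$, and on the $\nu^2$-eigenspace the images $\varphi^{\ast}\alpha$ and $(\varphi^{-1})^{\ast}\alpha$ stay in the same eigenspace. Therefore (\ref{E:2.10}) is exactly the scalar restriction to this eigenspace of the operator obtained by substituting ${\widetilde \Delta}_M^q$ for $\nu^2$ throughout the claimed formula; summing over eigenspaces yields the first identity. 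The case $\varphi=\Id$ is immediate: setting ${\widetilde \varphi^{\ast}}_q=({\widetilde \varphi}^{-1})^{\ast}_q=\Id$ kills the correction term in braces, and the elementary rewrite $\tfrac{e^x-1}{e^x+1}=1-\tfrac{2}{e^x+1}$ produces the stated form for $R^q_{\Id}(\lambda+\mu)$.

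For the log-determinant identity I apply Lemma \ref{Lemma:2.3} with $A=R^q_{\Id}(\lambda+\mu)$ and
\begin{eqnarray*}
Q \;=\; \frac{2 e^{a\sqrt{{\widetilde \Delta}_{M}^{q} + \lambda + \mu}}}{\bigl( e^{a\sqrt{{\widetilde \Delta}_{M}^{q} + \lambda + \mu}} - \Id \bigr)^{2}} \Bigl( \Id - \tfrac{1}{2}{\widetilde \varphi^{\ast}}_{q} - \tfrac{1}{2}({\widetilde \varphi}^{-1})^{\ast}_{q} \Bigr).
\end{eqnarray*}
For $\lambda+\mu>0$, $R^q_{\Id}(\lambda+\mu)$ is a positive elliptic pseudodifferential operator of order $1$, so the hypothesis on $A$ is met. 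To verify the hypotheses on $Q$: because $\varphi$ is an isometry, ${\widetilde \varphi^{\ast}}_q$ is unitary with adjoint $({\widetilde \varphi}^{-1})^{\ast}_q$, so $\tfrac{1}{2}({\widetilde \varphi^{\ast}}_q+({\widetilde \varphi}^{-1})^{\ast}_q)$ is the self-adjoint real part of a unitary operator and has spectrum in $[-1,1]$; hence $\Id-\tfrac{1}{2}{\widetilde \varphi^{\ast}}_q-\tfrac{1}{2}({\widetilde \varphi}^{-1})^{\ast}_q\geq 0$. The functional-calculus factor is a positive bounded function of ${\widetilde \Delta}_M^q+\lambda+\mu$ that decays like $e^{-a\sqrt{t}}$ as the eigenvalue $t\to\infty$, which together with Weyl's law makes it trace class (in fact smoothing) for $\lambda+\mu>0$; moreover it commutes with the unitary correction by the commutation noted above. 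Hence $Q$ is the product of two commuting non-negative operators, one trace class and one bounded, and is itself non-negative trace class, so Lemma \ref{Lemma:2.3} applies and yields the stated splitting.

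The main obstacle is the verification of the trace-class and non-negativity properties of $Q$: both ultimately rest on the isometry hypothesis, which supplies the unitarity of ${\widetilde \varphi^{\ast}}_q$ (for non-negativity) and the commutation $[{\widetilde \varphi^{\ast}}_q,{\widetilde \Delta}_M^q]=0$ (so that the non-negativity survives multiplication by the decaying spectral factor). Without this hypothesis neither the clean operator identity nor the determinant splitting would hold.
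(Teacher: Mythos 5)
Your proposal is correct and follows essentially the same route as the paper: the paper derives the eigenform formula (\ref{E:2.10}) and passes to the operator statement by the same diagonalization argument, then obtains the determinant splitting from Lemma \ref{Lemma:2.3}. Your explicit verification that the correction term $Q$ is non-negative (via unitarity of ${\widetilde \varphi^{\ast}}_{q}$) and trace class (via the $e^{-a\sqrt{t}}$ decay and Weyl's law) fills in hypotheses the paper leaves implicit, but does not change the argument.
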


\vspace{0.2 cm}

We fix $\lambda$ and take $\mu$ as a parameter. By the BFK-gluing formula (\cite{BFK}, \cite{Ca}) we have the following equality.

\begin{eqnarray}    \label{E:2.11}
\log \Det \left( \Delta^{q}_{M_{\varphi}} + \lambda + \mu \right) & = & \sum_{k=0}^{\big[ \frac{m-1}{2} \big]} c_{k}(\varphi, \lambda) \mu^{k} +
\log \Det \left( \Delta^{q}_{[0, a] \times M, \Dir} + \lambda + \mu \right) + \log \Det R^{q}_{\varphi}(\lambda + \mu)     \nonumber \\
& = & \sum_{k=0}^{\big[\frac{m-1}{2}\big]} c_{k}(\varphi, \lambda) \mu^{k} + \log \Det \left( \Delta^{q}_{[0, a] \times M, \Dir} + \lambda + \mu \right) + \log \Det R^{q}_{\Id}(\lambda + \mu)      \nonumber \\
& + &  \log \ddet_{Fr} \left\{ \Id + \frac{2 e^{a\sqrt{{\widetilde \Delta}_{M}^{q} + \lambda + \mu}}}{\left( e^{a\sqrt{{\widetilde \Delta}_{M}^{q} + \lambda + \mu}} - \Id \right)^{2}}
\left( \Id - \frac{1}{2}  {\widetilde \varphi^{\ast}}_{q} - \frac{1}{2} ({\widetilde \varphi}^{-1})^{\ast}_{q} \right) \right\}.
\end{eqnarray}

\noindent
For fixed $\lambda > 0$ and $\mu \rightarrow \infty$, the each term in the above equality has an asymptotic expansion. It is well known that the zero coefficients in the asymptotic expansions of
$\log \Det \left( \Delta^{q}_{M_{\varphi}} + \lambda + \mu \right)$ and $\log \Det \left( \Delta^{q}_{[0, a] \times M, \Dir} + \lambda + \mu \right)$ for $\mu \rightarrow \infty$ are zero (Lemma 2.1 in \cite{KL2}, eq.(1.6) in \cite{KL3}, or eq.(5.1) in \cite{Vo}).
Since

\begin{eqnarray}      \label{E:2.12}
\log \ddet_{Fr} \left\{ \Id + \frac{2 e^{a\sqrt{{\widetilde \Delta}_{M}^{q} + \lambda + \mu}}}{\left( e^{a\sqrt{{\widetilde \Delta}_{M}^{q} + \lambda + \mu}} - 1 \right)^{2}}
\left( \Id - \frac{1}{2}  {\widetilde \varphi^{\ast}}_{q} - \frac{1}{2} ({\widetilde \varphi}^{-1})^{\ast}_{q} \right) \right\} & = & O(e^{- a \sqrt{\mu}}) ,
\end{eqnarray}

\noindent
it follows that $-c_{0}(\varphi, \lambda)$ is the zero coefficient in the asymptotic expansion of $ \log \Det R^{q}_{\Id}(\lambda + \mu)$ for $\mu \rightarrow \infty$,
which shows that $c_{0}(\varphi, \lambda)$ does not depend on $\varphi$.
We compute $c_{0}(\varphi, \lambda) = c_{0}(\lambda)$ in the following way.
From Lemma \ref{Lemma:2.4} we obtain the following equality.

\begin{eqnarray}    \label{E:2.13}
\log \Det R^{q}_{\Id}(\lambda + \mu) & = &  \log 2 \cdot \zeta_{({\widetilde \Delta}_{M}^{q} + \lambda + \mu)}(0) +
\frac{1}{2} \log \Det \left( {\widetilde \Delta}_{M}^{q} + \lambda + \mu \right)    \\
& & + ~ \log \ddet_{Fr} \left( \Id - \frac{2}{e^{a\sqrt{{\widetilde \Delta}_{M}^{q} + \lambda + \mu}} + \Id} \right).  \nonumber
\end{eqnarray}

\noindent
Since $\log \ddet_{Fr} \left( \Id - \frac{2}{e^{a\sqrt{{\widetilde \Delta}_{M}^{q} + \lambda + \mu}} + \Id} \right) = O(e^{-a \sqrt{\mu}})$ and the zero coefficient in the asymptotic expansion of
$\log \Det \left( {\widetilde \Delta}_{M}^{q} + \lambda + \mu \right)$ for $\mu \rightarrow \infty$ is zero,
the zero coefficient of $\log \Det R^{q}_{\Id}(\lambda + \mu)$ is equal to the zero coefficient of $\log 2 \cdot \zeta_{({\widetilde \Delta}_{M}^{q} + \lambda + \mu)}(0)$.
Denoting for $t \rightarrow 0$

\begin{eqnarray}     \label{E:2.14}
\Tr e^{-t{\widetilde \Delta}_{M}^{q}} & \sim & \sum_{j=0}^{\infty} {\frak a}_{j} t^{- \frac{m-1}{2}+ j},
\end{eqnarray}

\noindent
it follows that

\begin{eqnarray}    \label{E:2.15}
\log 2 \cdot \zeta_{({\widetilde \Delta}_{M}^{q} + \lambda + \mu)}(0)  =
\begin{cases} \log 2 \cdot \sum_{\ell = 0}^{\frac{m-1}{2}} \sum_{k=0}^{\frac{m-1}{2} - \ell}  \frac{(-1)^{k+\ell}}{k!\ell!} {\frak a}_{(\frac{m-1}{2} - (k + \ell))} \lambda^{k} \mu^{\ell}  & \quad \text{for} \quad m \quad \text{odd}   \\
0 & \quad \text{for} \quad m \quad \text{even} ,   \end{cases}
\end{eqnarray}

\noindent
which shows that

\begin{eqnarray}   \label{E:2.16}
c_{0}(\lambda) & = &
\begin{cases} - \log 2 \cdot  \sum_{k=0}^{\frac{m-1}{2}} \frac{(-1)^{k}}{k!} {\frak a}_{(\frac{m-1}{2} - k)} \lambda^{k}  & \quad \text{for} \quad m \quad \text{odd}   \\
0 & \quad \text{for} \quad m \quad \text{even} .   \end{cases}
\end{eqnarray}

\noindent
Taking $\mu = 0$ in (\ref{E:2.11}), we obtain the following result.

\begin{eqnarray}     \label{E:2.17}
\log \Det \left( \Delta^{q}_{M_{\varphi}} + \lambda \right) & = & c_{0}(\lambda) + \log \Det \left( \Delta^{q}_{[0, a] \times M, \Dir} + \lambda \right) + \log \Det R^{q}_{\Id}(\lambda) \\
& & + ~  \log \ddet_{Fr} \left\{ \Id + \frac{2 e^{a\sqrt{{\widetilde \Delta}_{M}^{q} + \lambda}}}{\left( e^{a\sqrt{{\widetilde \Delta}_{M}^{q} + \lambda}} - \Id \right)^{2}}
\left( \Id - \frac{1}{2}  {\widetilde \varphi^{\ast}}_{q} - \frac{1}{2} ({\widetilde \varphi}^{-1})^{\ast}_{q} \right) \right\}.  \nonumber
\end{eqnarray}

\noindent
If $\varphi = \Id$, this formula is reduced to the following equality.

\begin{eqnarray}    \label{E:2.18}
\log \Det \left( \Delta^{q}_{M \times S^{1}(\frac{a}{2\pi})} + \lambda \right) & = & c_{0}(\lambda) + \log \Det \left( \Delta^{q}_{[0, a] \times M, \Dir} + \lambda \right) + \log \Det R^{q}_{\Id}(\lambda).
\end{eqnarray}

\noindent
In fact, this equality can be obtained by using Theorem 7.1 of \cite{FG} and Proposition 5.1 of \cite{MM}.
This leads to the following result.

\begin{theorem}   \label{Theorem:2.5}
Let $\varphi : (M, g^{M}) \rightarrow (M, g^{M})$ be an isometry on a compact oriented Riemannian manifold $(M, g^{M})$ and $M_{\varphi}$ be the metric mapping torus associated to $\varphi$.
For $\lambda > 0$, we have the following equality.
\begin{eqnarray*}
& & \log \Det \left( \Delta^{q}_{M_{\varphi}} + \lambda \right)
~ = ~ c_{0}(\lambda) + \log \Det \left( \Delta^{q}_{[0, a] \times M, \Dir} + \lambda \right)
+ \log \Det R^{q}_{\varphi}(\lambda)  \\
& & \hspace{0.5 cm} = ~ \log \Det \left( \Delta^{q}_{M \times S^{1}(\frac{a}{2\pi})} + \lambda \right) ~ + ~
\log \ddet_{Fr} \left\{ \Id + \frac{2 e^{a\sqrt{{\widetilde \Delta}_{M}^{q} + \lambda}}}{\left( e^{a\sqrt{{\widetilde \Delta}_{M}^{q} + \lambda}} - \Id \right)^{2}}
\left( \Id - \frac{1}{2} {\widetilde \varphi^{\ast}}_{q} - \frac{1}{2} ({\widetilde \varphi}^{-1})^{\ast}_{q} \right) \right\},
\end{eqnarray*}

\noindent
where $c_{0}(\lambda)$ is given in (\ref{E:2.16}).
\end{theorem}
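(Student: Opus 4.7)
The theorem is really a book-keeping consolidation of the identities the author has already assembled, so my plan is to simply glue together (\ref{E:2.17}), (\ref{E:2.18}) and Lemma \ref{Lemma:2.4}. Nothing new has to be invoked; the work is in identifying which grouping of terms gives which of the two displayed lines.

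First I would record the BFK outcome at $\mu=0$, which is precisely (\ref{E:2.17}): the BFK-gluing formula applied to the decomposition $M_{\varphi}= (M\times\{0\})\cup (M_\varphi\setminus M\times\{0\})$ (with $M_\varphi\setminus M\times\{0\}\cong [0,a]\times M$) yields a polynomial-in-$\mu$ correction plus the Dirichlet and Dirichlet-to-Neumann contributions, and the argument already given (asymptotics in $\mu$, the bound (\ref{E:2.12}), and the computation (\ref{E:2.13})--(\ref{E:2.16})) identifies the constant term $c_0(\lambda)$ and shows it is independent of $\varphi$. Setting $\mu=0$ and using Lemma \ref{Lemma:2.4} to combine
\[
\log\Det R^{q}_{\Id}(\lambda) \; + \; \log\ddet_{Fr}\!\left\{\Id + \frac{2 e^{a\sqrt{\widetilde\Delta_{M}^{q}+\lambda}}}{\bigl(e^{a\sqrt{\widetilde\Delta_{M}^{q}+\lambda}}-\Id\bigr)^{2}}\!\left(\Id -\tfrac12\widetilde{\varphi^{\ast}}_{q}-\tfrac12(\widetilde\varphi^{-1})^{\ast}_{q}\right)\right\}
\]
into the single term $\log\Det R^{q}_{\varphi}(\lambda)$ establishes the first equality in the theorem.

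For the second equality I would take the special case $\varphi=\Id$ of the same identity, which is (\ref{E:2.18}) (equivalently, one can observe that it follows from Theorem 7.1 of \cite{FG} and Proposition 5.1 of \cite{MM}, as the author remarks). Subtracting (\ref{E:2.18}) from (\ref{E:2.17}) cancels $c_{0}(\lambda)$, $\log\Det(\Delta^{q}_{[0,a]\times M,\Dir}+\lambda)$, and $\log\Det R^{q}_{\Id}(\lambda)$ simultaneously, leaving exactly the Fredholm-determinant term and thus producing the second displayed equation.

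There is no serious obstacle: the genuinely delicate points—the justification that $c_{k}(\varphi,\lambda)$ is independent of $\varphi$ for $k=0$ via (\ref{E:2.12}), and the conjugation of the $R^{q}_{\Id}$ factor by the block-diagonal operator to obtain the Fredholm-determinant correction—have already been done en route to Lemma \ref{Lemma:2.4} and to (\ref{E:2.17}). The only care needed is to check that the hypothesis of Lemma \ref{Lemma:2.3} applies at $\mu=0$, i.e.\ that the operator in braces really is of the form $\Id+Q$ with $Q$ trace class; this is immediate because the factor $\tfrac{2 e^{a\sqrt{\widetilde\Delta_{M}^{q}+\lambda}}}{(e^{a\sqrt{\widetilde\Delta_{M}^{q}+\lambda}}-\Id)^{2}}$ is smoothing on the positive-eigenvalue part of $\widetilde\Delta_{M}^{q}$ for $\lambda>0$, while $\Id-\tfrac12\widetilde{\varphi^{\ast}}_{q}-\tfrac12(\widetilde\varphi^{-1})^{\ast}_{q}$ is bounded, so their product is trace class. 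With this verification the two equalities of Theorem \ref{Theorem:2.5} follow.
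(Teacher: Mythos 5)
Your proposal is correct and follows essentially the same route as the paper: the first equality is (\ref{E:2.17}) (the $\mu=0$ specialization of the BFK identity (\ref{E:2.11}), with $c_{0}(\lambda)$ identified via (\ref{E:2.12})--(\ref{E:2.16})) combined with Lemma \ref{Lemma:2.4}, and the second equality follows by substituting the $\varphi=\Id$ case (\ref{E:2.18}). Your added remark verifying that the bracketed operator is $\Id+Q$ with $Q$ trace class is a sensible explicit check of the hypothesis of Lemma \ref{Lemma:2.3}, which the paper leaves implicit.
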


We use Theorem \ref{Theorem:2.5} to compare the heat trace asymptotics of $M_{\varphi}$ with that of $M \times S^{1}(\frac{a}{2\pi})$. For $t \rightarrow 0$, we denote

\begin{eqnarray}   \label{E:2.19}
\Tr e^{- t \Delta^{q}_{M_{\varphi}}} & \sim & \sum_{j=0}^{\infty} {\frak b}_{j} t^{- \frac{m}{2} + j}, \qquad
\Tr e^{- t \Delta^{q}_{M \times S^{1}(\frac{a}{2\pi})}} ~ \sim ~ \sum_{j=0}^{\infty} {\frak c}_{j} t^{- \frac{m}{2} + j}.
\end{eqnarray}

\noindent
Simple computation shows that $\log \Det \left( \Delta^{q}_{M_{\varphi}} + \lambda \right)$
has the following asymptotic expansion for $\lambda \rightarrow \infty$ (Lemma 2.1 in \cite{KL2}, eq.(1.6) in \cite{KL3}) .

\begin{eqnarray}    \label{E:2.20}
\log \Det \left( \Delta^{q}_{M_{\varphi}} + \lambda \right) & \sim & - \sum^{N}_{\stackrel{j=0}{j \neq m}} {\frak b}_{j}
\frac{d}{ds} \left( \frac{\Gamma(s - \frac{m-j}{2})}{\Gamma(s)} \right)\bigg|_{s=0} \lambda^{\frac{m-j}{2}} + {\frak b}_{m} \log \lambda   \\
& & + ~ \sum_{j=0}^{m-1} {\frak b}_{j} \left( \frac{\Gamma(s - \frac{m-j}{2})}{\Gamma(s)} \right)\bigg|_{s=0} \lambda^{\frac{m-j}{2}} \log \lambda + O(\lambda^{- \frac{N + 1-m}{2}}).    \nonumber
\end{eqnarray}

\noindent
Similarly, replacing ${\frak b}_{j}$ with ${\frak c}_{j}$ yields the asymptotic expansion of
$\log \Det \left( \Delta^{q}_{M \times S^{1}(\frac{a}{2\pi})} + \lambda \right)$.
Since $\log \ddet_{Fr} \left\{ \Id + \frac{2 e^{a\sqrt{{\widetilde \Delta}_{M}^{q} + \lambda}}}{\left( e^{a\sqrt{{\widetilde \Delta}_{M}^{q} + \lambda}} - 1 \right)^{2}}
\left( \Id - \frac{1}{2} {\widetilde \varphi^{\ast}}_{q} - \frac{1}{2} ({\widetilde \varphi}^{-1})^{\ast}_{q} \right) \right\}$ is $O(e^{- a \sqrt{\lambda}})$ for $\lambda \rightarrow \infty$, Theorem \ref{Theorem:2.5} shows that
$\log \Det \left( \Delta^{q}_{M_{\varphi}} + \lambda \right)$ and $\log \Det \left( \Delta^{q}_{M \times S^{1}(\frac{a}{2\pi})} + \lambda \right)$ have the same asymptotic expansions,
which yields the following result.

\begin{corollary}   \label{Corollary:2.6}
For $t \rightarrow 0$, $\Tr e^{- t \Delta^{q}_{M_{\varphi}}}$ and $\Tr e^{- t \Delta^{q}_{M \times S^{1}(\frac{a}{2\pi})}}$ have the same asymptotic expansions, {\it i.e.} ${\frak b}_{j} = {\frak c}_{j}$.
\end{corollary}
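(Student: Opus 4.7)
The plan is to transfer information between the two asymptotic expansions via the identity of Theorem~\ref{Theorem:2.5}, and to use the uniqueness of asymptotic expansions to match coefficients.

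First I would write, using Theorem~\ref{Theorem:2.5},
\begin{eqnarray*}
\log \Det \left( \Delta^{q}_{M_{\varphi}} + \lambda \right) - \log \Det \left( \Delta^{q}_{M \times S^{1}(\frac{a}{2\pi})} + \lambda \right)
 =  \log \ddet_{Fr} \left\{ \Id + \frac{2 e^{a\sqrt{{\widetilde \Delta}_{M}^{q} + \lambda}}}{\left( e^{a\sqrt{{\widetilde \Delta}_{M}^{q} + \lambda}} - \Id \right)^{2}}
\left( \Id - \tfrac{1}{2} {\widetilde \varphi^{\ast}}_{q} - \tfrac{1}{2} ({\widetilde \varphi}^{-1})^{\ast}_{q} \right) \right\}.
\end{eqnarray*}
The right-hand side is $O(e^{-a\sqrt{\lambda}})$ as $\lambda \to \infty$, because the operator inside the Fredholm determinant is exponentially small in $\sqrt{\lambda}$: its singular values are controlled by $e^{-a\sqrt{\nu_j^2 + \lambda}}$, where $\nu_j^2$ runs over the eigenvalues of ${\widetilde \Delta}_M^q$. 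In particular, this difference has a vanishing asymptotic expansion in powers of $\lambda$ (and in $\lambda^s \log \lambda$) as $\lambda \to \infty$.

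Next I would apply the expansion (\ref{E:2.20}) to both $\log \Det(\Delta^q_{M_\varphi} + \lambda)$ (with coefficients $\mathfrak{b}_j$) and to $\log \Det(\Delta^q_{M \times S^1(a/2\pi)} + \lambda)$ (with coefficients $\mathfrak{c}_j$). Subtracting the two expansions, and using that their difference is $O(e^{-a\sqrt{\lambda}})$, every coefficient of $\lambda^{(m-j)/2}$, of $\log\lambda$, and of $\lambda^{(m-j)/2}\log\lambda$ in the resulting formal series must vanish. Since each coefficient is, up to an explicit nonzero factor coming from $\frac{d}{ds}\bigl(\Gamma(s-\tfrac{m-j}{2})/\Gamma(s)\bigr)|_{s=0}$ or $\Gamma(s-\tfrac{m-j}{2})/\Gamma(s)|_{s=0}$, equal to $\mathfrak{b}_j - \mathfrak{c}_j$, and since the powers $\lambda^{(m-j)/2}$ for distinct $j$ are linearly independent, one may read off $\mathfrak{b}_j = \mathfrak{c}_j$ for all $j \geq 0$.

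The only delicate point is verifying that the coefficient assignments in (\ref{E:2.20}) are invertible, i.e.\ that for each $j$ at least one of the two factors $\frac{d}{ds}\bigl(\Gamma(s-\tfrac{m-j}{2})/\Gamma(s)\bigr)|_{s=0}$ or $\Gamma(s-\tfrac{m-j}{2})/\Gamma(s)|_{s=0}$ is nonzero, so that $\mathfrak{b}_j - \mathfrak{c}_j$ can actually be solved for. For $j \neq m$ with $\tfrac{m-j}{2}$ not a non-negative integer the $\Gamma$-ratio is nonzero, and for $j = m$ and $j$ in the opposite parity regime the distinguished $\log\lambda$ terms isolate $\mathfrak{b}_m$ and $\mathfrak{b}_j$ respectively; this is a standard bookkeeping exercise which I would carry out case by case. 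Once this is done, the equality $\mathfrak{b}_j = \mathfrak{c}_j$ follows for every $j$, completing the proof.
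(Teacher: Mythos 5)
Your proposal is correct and follows essentially the same route as the paper: both use Theorem~\ref{Theorem:2.5} together with the $O(e^{-a\sqrt{\lambda}})$ decay of the Fredholm-determinant term to conclude that the two large-$\lambda$ expansions of the form (\ref{E:2.20}) coincide, and then read off ${\frak b}_{j}={\frak c}_{j}$. Your extra care in checking that the coefficient map $j \mapsto ({\frak b}_j\text{-coefficients of }\lambda^{(m-j)/2},\ \lambda^{(m-j)/2}\log\lambda)$ is invertible is a detail the paper leaves implicit, and your case check is right (for $\tfrac{m-j}{2}$ a positive integer the $\log\lambda$ coefficient $\tfrac{(-1)^{k}}{k!}$ is nonzero, and in the remaining cases $\tfrac{d}{ds}\bigl(\Gamma(s-\tfrac{m-j}{2})/\Gamma(s)\bigr)\big|_{s=0}$ is nonzero), but it does not change the argument.
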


\vspace{0.2 cm}

We next discuss the behavior of each term in Theorem \ref{Theorem:2.5} for $\lambda \rightarrow 0$.
From the definition of $R_{\varphi}^{q}(\lambda)$, it follows that

\begin{eqnarray}    \label{E:2.21}
\Ker R_{\varphi}^{q}(0) & = & \{ \omega|_{{M} \times \{ 0 \}}  \mid \omega \in \Ker \Delta^{q}_{M_{\varphi}} \}, \qquad \Dim \Ker R_{\varphi}^{q}(0) = \Dim \Ker \Delta^{q}_{M_{\varphi}}.
\end{eqnarray}

\noindent
{\it Remark} \ref{Remark:2.2} shows that

\begin{eqnarray}     \label{E:2.22}
\Ker R_{\varphi}^{q}(0) & = & \{ \alpha \in {\mathcal H}^{q}(M) \mid \varphi^{\ast} \alpha = \alpha \} \oplus \{ \beta \in {\mathcal H}^{q-1}(M) \mid \varphi^{\ast} \beta = \beta \}.
\end{eqnarray}

\noindent
We define ${\mathcal H}^{q}(M)$, ${\mathcal H}^{q}_{\varphi}(M)$, ${\mathcal H}^{q}_{\varphi}(M)^{\perp}$ and $b_{q}$, $\ell^{q}_{\varphi}$ as follows.

\begin{eqnarray}    \label{E:2.23}
& & {\mathcal H}^{q}(M) ~ := ~ \{ \sigma \in \Omega^{q}(M) \mid d \sigma = d^{\ast} \sigma = 0 \}, \qquad b_{q}  := \Dim \Ker {\mathcal H}^{q}(M) ,  \\
& & {\mathcal H}^{q}_{\varphi}(M) ~ := ~ \{ \sigma \in {\mathcal H}^{q}(M) \mid \varphi^{\ast} \sigma =  \sigma \}, \qquad \ell^{q}_{\varphi}  := \Dim \Ker {\mathcal H}^{q}_{\varphi}(M),     \nonumber \\
& & {\mathcal S}^{q}_{\varphi}(M) ~ := ~ {\mathcal H}^{q}(M) \ominus  {\mathcal H}^{q}_{\varphi}(M),    \qquad   b_{q} - \ell^{q}_{\varphi} = \Dim \Ker {\mathcal S}^{q}_{\varphi}(M).      \nonumber
\end{eqnarray}

\noindent
Then it follows that

\begin{eqnarray}    \label{E:2.24}
 \Dim \Ker R_{\varphi}^{q}(0) & = & \Dim \Ker \Delta^{q}_{M_{\varphi}} ~ = ~ \ell^{q}_{\varphi} + \ell^{q-1}_{\varphi}.
\end{eqnarray}

\noindent
We note that

\begin{eqnarray}    \label{E:2.25}
& & \log \Det \left( \Delta^{q}_{M_{\varphi}} + \lambda \right) ~ = ~ \left(\ell^{q}_{\varphi} + \ell^{q-1}_{\varphi} \right) \log \lambda + \log \Det^{\ast} \Delta^{q}_{M_{\varphi}}  + O(\lambda),  \\
& & \log \Det \left( \Delta^{q}_{[0, a] \times M, \Dir} + \lambda \right) ~ = ~ \log \Det \Delta^{q}_{[0, a] \times M, \Dir}  + O(\lambda).
\nonumber
\end{eqnarray}

\noindent
For $\alpha \in \Ker R_{\varphi}^{q}(0)$ and $\lambda \rightarrow 0$, Lemma \ref{Lemma:2.4} shows that

\begin{eqnarray}    \label{E:2.26}
R_{\varphi}^{q}(\lambda) ~ \alpha & = & 2 \sqrt{\lambda} \cdot \frac{e^{a \sqrt{\lambda}} - 1}{e^{a \sqrt{\lambda}} + 1} ~ \alpha
~ = ~ a \lambda ~ ( 1 + O(\sqrt{\lambda})) ~ \alpha,
\end{eqnarray}

\noindent
which yields

\begin{eqnarray}    \label{E:2.27}
\log \Det R_{\varphi}^{q}(\lambda) & = & (\ell^{q}_{\varphi} + \ell^{q-1}_{\varphi} ) \log (a \lambda) + \log \Det^{\ast} R_{\varphi}^{q}(0) + O(\lambda).
\end{eqnarray}

\noindent
Eq. (\ref{E:2.16}) shows that

\begin{eqnarray}     \label{E:2.28}
c_{0}(0) & = & - \log 2 \left( \zeta_{{\widetilde \Delta}_{M}^{q}}(0) + \Dim \Ker {\widetilde \Delta}_{M}^{q} \right) ~ = ~
- \log 2 \left( \zeta_{\Delta_{M}^{q}}(0) + \zeta_{\Delta_{M}^{q-1}}(0) + b_{q} + b_{q-1} \right).
\end{eqnarray}

\noindent
Letting $\lambda \rightarrow 0$ in the first equality of Theorem \ref{Theorem:2.5} with (\ref{E:2.25}) - (\ref{E:2.28}), we have the following result.

\begin{eqnarray}     \label{E:2.29}
\log \Det^{\ast} \Delta^{q}_{M_{\varphi}} & = & (\ell^{q}_{\varphi} + \ell^{q-1}_{\varphi}) \log a ~ - ~ \log 2 \left( \zeta_{\Delta_{M}^{q}}(0) + \zeta_{\Delta_{M}^{q-1}}(0) + b_{q} + b_{q-1} \right) \\
& & + ~ \log \Det \Delta^{q}_{[0, a] \times M, \Dir} + \log \Det^{\ast} R_{\varphi}^{q}(0).     \nonumber
\end{eqnarray}

\noindent
Setting $\varphi = \Id$ and using $\ell^{q}_{\Id} = b_{q}$, we have the following result.

\begin{eqnarray}    \label{E:2.30}
\log \Det^{\ast} \Delta^{q}_{\Id} & = & (b_{q} + b_{q-1}) \log a ~ - ~ \log 2 \left( \zeta_{\Delta_{M}^{q}}(0) + \zeta_{\Delta_{M}^{q-1}}(0) + b_{q} + b_{q-1} \right) \\
& & + ~ \log \Det \Delta^{q}_{[0, a] \times M, \Dir} + \log \Det^{\ast} R_{\Id}^{q}(0),    \nonumber
\end{eqnarray}

\noindent
which leads to

\begin{eqnarray}     \label{E:2.31}
\log \Det^{\ast} \Delta^{q}_{M_{\varphi}}  =  (\ell^{q}_{\varphi} + \ell^{q-1}_{\varphi} - b_{q} - b_{q-1}) \log a  +  \log \Det^{\ast} \Delta^{q}_{\Id}  - \log \Det^{\ast} R_{\Id}^{q}(0)
 + \log \Det^{\ast} R_{\varphi}^{q}(0).
\end{eqnarray}

\noindent
Lemma \ref{Lemma:2.4} shows that

\begin{eqnarray*}
R_{\varphi}^{q}(0)  =  2 \sqrt{{\widetilde \Delta}^{q}_{M}} + \frac{ 4 \sqrt{{\widetilde \Delta}^{q}_{M}}}{e^{ 2 a \sqrt{{\widetilde \Delta}^{q}_{M}}} - \Id}
\left( \Id - \frac{1}{2} e^{a \sqrt{{\widetilde \Delta}^{q}_{M}}} \left( {\widetilde \varphi^{\ast}_{q}} + \widetilde{(\varphi^{-1})}^{\ast}_{q} \right) \right),  \quad
R_{\Id}^{q}(0)  =  2 \sqrt{{\widetilde \Delta}^{q}_{M}} - \frac{ 4 \sqrt{{\widetilde \Delta}^{q}_{M}}}{e^{ a \sqrt{{\widetilde \Delta}^{q}_{M}}} + \Id},
\end{eqnarray*}

\noindent
which leads to the following result.

\begin{theorem}   \label{Theorem:2.7}
Under the same assumptions as in Theorem \ref{Theorem:2.5}, the following equality holds.

\begin{eqnarray*}
\log \Det^{\ast} \Delta^{q}_{M_{\varphi}}
& = & - (b_{q} + b_{q-1} - \ell^{q}_{\varphi} - \ell^{q-1}_{\varphi}) \cdot \log \frac{a^{2}}{2} ~ + ~  \log \Det^{\ast} \Delta^{q}_{M \times S^{1}(\frac{a}{2\pi})} \\
& &  + ~  \log \ddet_{Fr} \left( \Id - \frac{1}{2} \left( {\widetilde \varphi^{\ast}_{q}}  + ~ \widetilde{(\varphi^{-1})}^{\ast}_{q} \right) \right)\big|_{{\widetilde {\mathcal S}_{\varphi}^{q}}(M)} \\
&  & + ~ \log \ddet_{Fr} \left\{ \Id + \frac{2 e^{a \sqrt{{\widetilde \Delta_{M}^{q}}}}}{\left( e^{a \sqrt{{\widetilde \Delta_{M}^{q}}}} - \Id \right)^{2}}
\left( \Id - \frac{{\widetilde \varphi}^{\ast}_{q} + \widetilde{(\varphi^{-1})}^{\ast}_{q}}{2} \right) \right\}\big|_{{\widetilde {\mathcal H}}^{q}(M)^{\perp}},
\end{eqnarray*}

\noindent
where ${\widetilde {\mathcal S}_{\varphi}^{q}}(M) = \left( \begin{array}{clcr} {\mathcal S}_{\varphi}^{q}(M) & 0 \\ 0 & {\mathcal S}_{\varphi}^{q-1}(M) \end{array} \right) ~$ and $~ {\widetilde {\mathcal H}^{q}}(M)^{\perp} = \left( \begin{array}{clcr} {\mathcal H}^{q}(M)^{\perp} & 0 \\ 0 & {\mathcal H}^{q-1}(M)^{\perp} \end{array} \right)$.
\end{theorem}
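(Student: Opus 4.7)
The starting point is equation (\ref{E:2.31}), which already expresses $\log \Det^{\ast} \Delta^{q}_{M_{\varphi}}$ in terms of $\log \Det^{\ast} \Delta^{q}_{M \times S^{1}(\frac{a}{2\pi})}$, the constant $(\ell^{q}_{\varphi} + \ell^{q-1}_{\varphi} - b_{q} - b_{q-1}) \log a$, and the correction $\log \Det^{\ast} R^{q}_{\varphi}(0) - \log \Det^{\ast} R^{q}_{\Id}(0)$. The whole task is therefore to unpack this correction using the explicit formulas for $R^{q}_{\varphi}(0)$ and $R^{q}_{\Id}(0)$ displayed just before the theorem. Since $\varphi$ is an isometry, $({\widetilde\varphi}^{\pm 1})^{\ast}_{q}$ commutes with $\widetilde\Delta_{M}^{q}$, so both operators are diagonal with respect to the orthogonal decomposition $\Omega^{q}(M) \oplus \Omega^{q-1}(M) = {\widetilde{\mathcal H}^{q}}(M) \oplus {\widetilde{\mathcal H}^{q}}(M)^{\perp}$, and I will treat the two blocks separately.

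On the non-harmonic block ${\widetilde{\mathcal H}^{q}}(M)^{\perp}$, where $\widetilde\Delta_{M}^{q}$ is strictly positive, Lemma \ref{Lemma:2.4} gives the factorization $R^{q}_{\varphi}(0) = R^{q}_{\Id}(0)\,(\Id + Q)$ where $Q$ is the operator $\frac{2 e^{a\sqrt{\widetilde\Delta_{M}^{q}}}}{(e^{a\sqrt{\widetilde\Delta_{M}^{q}}} - \Id)^{2}} \bigl( \Id - \frac{1}{2}({\widetilde\varphi}^{\ast}_{q} + \widetilde{(\varphi^{-1})}^{\ast}_{q}) \bigr)$. The scalar prefactor decays like $e^{-a\sqrt{\widetilde\Delta_{M}^{q}}}$, so $Q$ is trace class; and $\Id - \frac{1}{2}({\widetilde\varphi}^{\ast}_{q} + \widetilde{(\varphi^{-1})}^{\ast}_{q})$ is non-negative, because the unitary ${\widetilde\varphi}^{\ast}_{q}$ has eigenvalues $e^{i\theta}$ and the symmetrized operator therefore has eigenvalues $1 - \cos\theta \geq 0$. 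Lemma \ref{Lemma:2.3} then identifies the non-harmonic contribution to $\log \Det^{\ast} R^{q}_{\varphi}(0) - \log \Det^{\ast} R^{q}_{\Id}(0)$ with $\log \ddet_{Fr}(\Id + Q)|_{{\widetilde{\mathcal H}^{q}}(M)^{\perp}}$, which is exactly the last term in the theorem.

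On the harmonic block ${\widetilde{\mathcal H}^{q}}(M)$ both operators are finite-rank and one uses Remark \ref{Remark:2.2}: $R^{q}_{\Id}(0)$ vanishes identically, so its modified determinant contributes $0$, while $R^{q}_{\varphi}(0)$ acts as the scalar $\frac{2}{a}$ times $\Id - \frac{1}{2}({\widetilde\varphi}^{\ast}_{q} + \widetilde{(\varphi^{-1})}^{\ast}_{q})$. Its kernel is precisely ${\widetilde{\mathcal H}^{q}_{\varphi}}(M)$ of dimension $\ell^{q}_{\varphi} + \ell^{q-1}_{\varphi}$, and it is invertible on the complement ${\widetilde{\mathcal S}^{q}_{\varphi}}(M)$ whose dimension is $b_{q} + b_{q-1} - \ell^{q}_{\varphi} - \ell^{q-1}_{\varphi}$. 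Pulling the scalar out yields $\log \Det^{\ast} R^{q}_{\varphi}(0)|_{{\widetilde{\mathcal H}^{q}}(M)} = (b_{q} + b_{q-1} - \ell^{q}_{\varphi} - \ell^{q-1}_{\varphi}) \log \frac{2}{a} + \log \ddet_{Fr}\bigl(\Id - \frac{1}{2}({\widetilde\varphi}^{\ast}_{q} + \widetilde{(\varphi^{-1})}^{\ast}_{q})\bigr)|_{{\widetilde{\mathcal S}^{q}_{\varphi}}(M)}$, reproducing the third term of the theorem.

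Substituting both contributions into (\ref{E:2.31}) and collecting the coefficients of $\log a$ produces $(\ell^{q}_{\varphi} + \ell^{q-1}_{\varphi} - b_{q} - b_{q-1})\log a + (b_{q} + b_{q-1} - \ell^{q}_{\varphi} - \ell^{q-1}_{\varphi}) \log \frac{2}{a} = -(b_{q} + b_{q-1} - \ell^{q}_{\varphi} - \ell^{q-1}_{\varphi}) \log \frac{a^{2}}{2}$, which matches the coefficient in the theorem. The main difficulty is not an analytic estimate but the disciplined bookkeeping required to keep the three pieces ${\widetilde{\mathcal H}^{q}_{\varphi}}(M)$, ${\widetilde{\mathcal S}^{q}_{\varphi}}(M)$, and ${\widetilde{\mathcal H}^{q}}(M)^{\perp}$ aligned with the modified determinants of both $R^{q}_{\varphi}(0)$ and $R^{q}_{\Id}(0)$, so that every restricted Fredholm determinant is interpreted on the correct subspace and the $\log a$ arithmetic comes out with the right sign.
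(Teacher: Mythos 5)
Your proposal is correct and follows essentially the same route as the paper: the paper derives (\ref{E:2.31}) and then displays the explicit formulas for $R^{q}_{\varphi}(0)$ and $R^{q}_{\Id}(0)$, leaving implicit exactly the block decomposition into ${\widetilde {\mathcal H}}^{q}_{\varphi}(M)$, ${\widetilde {\mathcal S}}^{q}_{\varphi}(M)$, and ${\widetilde {\mathcal H}}^{q}(M)^{\perp}$ together with the application of Lemma \ref{Lemma:2.3} and Remark \ref{Remark:2.2} that you carry out. Your bookkeeping of the $\log a$ terms, $(\ell^{q}_{\varphi} + \ell^{q-1}_{\varphi} - b_{q} - b_{q-1})\log a + (b_{q} + b_{q-1} - \ell^{q}_{\varphi} - \ell^{q-1}_{\varphi})\log \frac{2}{a} = -(b_{q} + b_{q-1} - \ell^{q}_{\varphi} - \ell^{q-1}_{\varphi})\log\frac{a^{2}}{2}$, is exactly what the theorem requires.
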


\vspace{0.2 cm}

In the remaining part of this section, we are going to use Theorem \ref{Theorem:2.7} to compute the zeta-determinants of the scalar Laplacians defined on the Klein bottle ${\mathbb K}$ and some compact co-K\"ahler manifold ${\mathbb T}_{\varphi}$ given in \cite{BO} and \cite{CDM}.
The Klein bottle ${\mathbb K}$ is obtained by the group of motions on ${\mathbb R}^{2}$ generated by
$(x, y) \mapsto (x, y + 2 \pi \rho)$ and $(x, y) \mapsto (x + a , 2 \pi \rho - y)$ for some $a > 0$ and $\rho > 0$. The fundamental domain of ${\mathbb K}$ is $(0, a) \times (0, 2 \pi \rho)$.
We denote by $\Omega^{0}_{{\mathbb K}}({\mathbb R}^{2})$ the set of all smooth functions on ${\mathbb R}^{2}$ satisfying

\begin{eqnarray}    \label{E:2.32}
f(x, y) ~ = ~ f(x, y + 2 \pi \rho ) ~ = ~ f(x + a, 2 \pi \rho - y).
\end{eqnarray}

\noindent
Then, the Laplacian $\Delta_{{\mathbb K}}$ is described by

\begin{eqnarray}    \label{E:2.33}
\Delta_{{\mathbb K}} & = & - \left( \frac{\partial^{2}}{\partial x^{2}} + \frac{\partial^{2}}{\partial y^{2}} \right)
\end{eqnarray}

\noindent
defined on $\Omega^{0}_{{\mathbb K}}({\mathbb R}^{2})$.
Let $M = S^{1}(\rho)$ and $\varphi : S^{1}(\rho) \rightarrow S^{1}(\rho)$ be defined by $\varphi(\rho e^{i \theta}) = \rho e^{- i \theta}$. Then, ${\mathbb K} = M_{\varphi}$.
The eigenfunctions are given by

\begin{eqnarray}    \label{E:2.34}
\cos \frac{2 \pi mx}{a}  \cos \frac{ny}{\rho}, \quad \sin \frac{2 \pi mx}{a}  \cos \frac{ny}{\rho}, \quad
\cos \frac{2 \pi \left( m - \frac{1}{2} \right)x}{a}  \sin \frac{ny}{\rho}, \quad \sin \frac{2 \pi \left( m - \frac{1}{2} \right)x}{a}  \sin \frac{ny}{\rho},
\end{eqnarray}

\noindent
and their corresponding eigenvalues are

\begin{eqnarray}   \label{E:2.35}
& & \bigg\{ 0 \bigg\} ~ \cup ~
\bigg\{ \frac{4 \pi^{2} m^{2}}{a^{2}} + \frac{n^{2}}{\rho^{2}} \mid m, n = 1, 2, \cdots \bigg\}
~ \cup ~ \bigg\{ \frac{4 \pi^{2} m^{2}}{a^{2}} \mid m = 1, 2, \cdots \bigg\}  \\
& & \cup ~ \bigg\{ \frac{m^{2}}{\rho^{2}} \mid m = 1, 2, \cdots \bigg\} ~ \cup ~
\bigg\{ \frac{4 \pi^{2} \left( m - \frac{1}{2} \right)^{2}}{a^{2}} + \frac{n^{2}}{\rho^{2}} \mid m, n = 1, 2, \cdots \bigg\},    \nonumber
\end{eqnarray}

\noindent
where the multiplicity of $\frac{m^{2}}{\rho^{2}}$ is $1$ and the multiplicities of other non-zero eigenvalues are $2$. The zeta function on ${\mathbb K}$ is given by

\begin{eqnarray}    \label{E:2.36}
\zeta_{{\mathbb K}}(s) & = & 2 \sum_{n, m=1}^{\infty} \left( \frac{4 \pi^{2} m^{2}}{a^{2}} + \frac{n^{2}}{\rho^{2}} \right)^{-s} ~ + ~ 2 \sum_{m=1}^{\infty} \left(\frac{4 \pi^{2} m^{2}}{a^{2}} \right)^{-s}
~ + ~ \sum_{m=1}^{\infty} \left( \frac{m^{2}}{\rho^{2}} \right)^{-s} \\
& &  + ~ 2 \sum_{n, m=1}^{\infty} \left( \frac{4 \pi^{2} \left( m - \frac{1}{2} \right)^{2}}{a^{2}} + \frac{n^{2}}{\rho^{2}} \right)^{-s}.    \nonumber
\end{eqnarray}

\noindent
Since $\Ker \Delta_{{\mathbb K}}^{0}$, ${\mathcal H}^{0}(S^{1}(\rho))$ and ${\mathcal H}^{0}_{\varphi}(S^{1}(\rho))$
consist of constant functions, it follows that

\begin{eqnarray}    \label{E:2.37}
b_{0} ~ = ~ \ell^{0}_{\varphi} ~ = ~ 1, \qquad  {\mathcal S}_{\varphi}^{0}(S^{1}(\rho)) = \emptyset.
\end{eqnarray}

The flat torus ${\mathbb T}$ is obtained by the group of motions on ${\mathbb R}^{2}$ generated by $(x, y) \mapsto (x, y + 2 \pi \rho)$ and $(x, y) \mapsto (x + a , y)$. The fundamental domain is $(0, a) \times (0, 2 \pi \rho)$.
The eigenfunctions are given by

\begin{eqnarray}    \label{E:2.38}
\cos \frac{2 \pi mx}{a}  \cos \frac{ny}{\rho}, \quad \sin \frac{2 \pi mx}{a}  \cos \frac{ny}{\rho}, \quad
\cos \frac{2 \pi mx}{a}  \sin \frac{ny}{\rho}, \quad \sin \frac{2 \pi mx}{a}  \sin \frac{ny}{\rho},
\end{eqnarray}

\noindent
and their corresponding eigenvalues are

\begin{eqnarray}    \label{E:2.39}
\bigg\{ 0 \bigg\}  \cup
\bigg\{ \frac{4 \pi^{2} m^{2}}{a^{2}} + \frac{n^{2}}{\rho^{2}} \mid m, n = 1, 2, \cdots \bigg\}
 \cup  \bigg\{ \frac{4 \pi^{2} m^{2}}{a^{2}} \mid m = 1, 2, \cdots \bigg\}   \cup  \bigg\{ \frac{m^{2}}{\rho^{2}} \mid m = 1, 2, \cdots \bigg\},
\end{eqnarray}

\noindent
where the multiplicity of $\frac{4 \pi^{2} m^{2}}{a^{2}} + \frac{n^{2}}{\rho^{2}}$ is $4$ and
the multiplicities of other non-zero eigenvalues are $2$.
The zeta function on ${\mathbb T}$ is given by

\begin{eqnarray}    \label{E:2.40}
\zeta_{{\mathbb T}}(s) & = & 4 \sum_{n, m=1}^{\infty} \left( \frac{4 \pi^{2} m^{2}}{a^{2}} + \frac{n^{2}}{\rho^{2}} \right)^{-s} ~ + ~ 2 \sum_{m=1}^{\infty} \left(\frac{4 \pi^{2} m^{2}}{a^{2}} \right)^{-s}
~ + ~ 2 \sum_{m=1}^{\infty} \left( \frac{m^{2}}{\rho^{2}} \right)^{-s}.
\end{eqnarray}

\noindent
It follows that

\begin{eqnarray}    \label{E:2.41}
& & \zeta_{\Delta_{{\mathbb K}}}(s) - \zeta_{\Delta_{{\mathbb T}}}(s) \\
& = &  2 \sum_{n, m=1}^{\infty} \left( \frac{4 \pi^{2} \left( m - \frac{1}{2} \right)^{2}}{a^{2}} + \frac{n^{2}}{\rho^{2}} \right)^{-s}  -  2 \sum_{n, m=1}^{\infty} \left( \frac{4 \pi^{2} m^{2}}{a^{2}} + \frac{n^{2}}{\rho^{2}} \right)^{-s}  -  \sum_{m=1}^{\infty} \left( \frac{m^{2}}{\rho^{2}} \right)^{-s}.
\nonumber
\end{eqnarray}

\vspace{0.2 cm}
\noindent
Theorem \ref{Theorem:2.7} with (\ref{E:2.37}) shows that

\begin{eqnarray}    \label{E:2.42}
\log \Det^{\ast} \Delta_{{\mathbb K}} - \log \Det^{\ast} \Delta_{{\mathbb T}}
 =  \log \ddet_{Fr} \left\{ \Id + \frac{2 e^{a \sqrt{\Delta_{M}^{0}}}}{\left( e^{a \sqrt{\Delta_{M}^{0}}} - \Id \right)^{2}}
\left( \Id - \frac{\varphi^{\ast}_{0} + (\varphi^{-1})^{\ast}_{0}}{2} \right) \right\}\big|_{{\mathcal H}^{0}(S^{1}(\rho))^{\perp}}.
\end{eqnarray}

\noindent
Since $\varphi(\rho e^{i\theta}) = \rho e^{-i\theta}$, we get $\varphi(\rho e^{i\theta}) = \varphi^{-1}(\rho e^{i\theta}) = \rho e^{-i\theta}$
and hence
\begin{eqnarray}      \label{E:4.43}
\left( \Id - \frac{1}{2} \left( \varphi^{\ast}_{0} + (\varphi^{-1})^{\ast}_{0} \right) \right) \cos kx & = & 0, \quad \left( \Id - \frac{1}{2} \left( \varphi^{\ast}_{0} + (\varphi^{-1})^{\ast}_{0} \right) \right) \sin kx ~ = ~ 2 \sin kx.
\end{eqnarray}

\noindent
From $\Delta_{M}^{0} = - \frac{1}{\rho^{2}} \frac{\partial^{2}}{\partial \theta^{2}}$, it follows that

\begin{eqnarray}    \label{E:2.44}
& & \log \ddet_{Fr} \left\{ \Id + \frac{2 e^{a \sqrt{\Delta_{M}^{0}}}}{(e^{a \sqrt{\Delta_{M}^{0}}} - \Id)^{2}}
\left( \Id - \frac{\varphi_{0}^{\ast} + (\varphi_{0}^{-1})^{\ast}}{2} \right) \right\}\big|_{{\mathcal H}^{0}(S^{1}(\rho))^{\perp}} \\
& = & \sum_{k=1}^{\infty} \log \left( 1 + \frac{4 e^{\frac{ak}{\rho}}}{(e^{\frac{ak}{\rho}} - 1)^{2}} \right)
~ = ~  \sum_{k=1}^{\infty} \log \left( \frac{e^{\frac{ak}{\rho}} + 1}{e^{\frac{ak}{\rho}} - 1} \right)^{2} ~ = ~
2 \sum_{k=1}^{\infty} \log \left( 1 + \frac{2}{e^{\frac{ak}{\rho}} - 1} \right).   \nonumber
\end{eqnarray}

\noindent
It is known in Theorem 8.3 of \cite{FG} that

\begin{eqnarray}    \label{E:2.45}
\log \Det^{\ast} \Delta_{{\mathbb T}} & = & 2 \log 2\pi \rho - \frac{2 \pi^{2} \rho}{3a} + 4 \sum_{k=1}^{\infty} \log \left( 1 - e^{- \frac{4 \pi^{2} \rho k}{a}} \right),
\end{eqnarray}

\noindent
which leads to the following result.

\begin{theorem}     \label{Theorem:2.8}
The zeta-determinant of the scalar Laplacian on the Klein bottle ${\mathbb K}$ defined as above is given as follows.
\begin{eqnarray*}
\log \Det^{\ast} \Delta_{{\mathbb K}}  & = &
2 \log 2\pi \rho - \frac{2 \pi^{2} \rho}{3a} + 4 \sum_{k=1}^{\infty} \log \left( 1 - e^{- \frac{4 \pi^{2} \rho k}{a}} \right) + 2 \sum_{k=1}^{\infty} \log \left( 1 + \frac{2}{e^{\frac{ak}{\rho}} - 1} \right).
\end{eqnarray*}
\end{theorem}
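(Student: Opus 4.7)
The plan is to specialize Theorem \ref{Theorem:2.7} to the case $q=0$, $M=S^{1}(\rho)$, and the isometry $\varphi(\rho e^{i\theta}) = \rho e^{-i\theta}$, for which $M_{\varphi} = {\mathbb K}$. The scalar case is especially clean because $\Omega^{-1}(M) = 0$, so every term indexed by $q-1$ drops. Using (\ref{E:2.37}), namely $b_{0} = \ell^{0}_{\varphi} = 1$ and ${\mathcal S}^{0}_{\varphi}(S^{1}(\rho)) = \emptyset$, the prefactor $-(b_{q}+b_{q-1}-\ell^{q}_{\varphi}-\ell^{q-1}_{\varphi})\log(a^{2}/2)$ vanishes and the middle Fredholm determinant restricted to $\widetilde{\mathcal S}^{0}_{\varphi}(M)$ is trivial. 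Theorem \ref{Theorem:2.7} therefore collapses to the identity (\ref{E:2.42}):
\begin{equation*}
\log \Det^{\ast} \Delta_{{\mathbb K}} - \log \Det^{\ast} \Delta_{{\mathbb T}} = \log \ddet_{Fr}\!\left\{ \Id + \frac{2 e^{a \sqrt{\Delta_{M}^{0}}}}{(e^{a \sqrt{\Delta_{M}^{0}}} - \Id)^{2}} \left( \Id - \tfrac{1}{2}(\varphi^{\ast}_{0} + (\varphi^{-1})^{\ast}_{0}) \right) \right\}\Big|_{{\mathcal H}^{0}(S^{1}(\rho))^{\perp}}.
\end{equation*}

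Next I would evaluate this Fredholm determinant by diagonalizing $\Delta_{M}^{0}$ on ${\mathcal H}^{0}(S^{1}(\rho))^{\perp}$ using the basis $\{\cos(k\theta), \sin(k\theta) : k \geq 1\}$, each pair having eigenvalue $k^{2}/\rho^{2}$. Because $\varphi$ reverses orientation on the circle, $\varphi^{\ast}_{0} = (\varphi^{-1})^{\ast}_{0}$ on functions of $\theta$, and (\ref{E:4.43}) shows that $\Id - \tfrac{1}{2}(\varphi^{\ast}_{0}+(\varphi^{-1})^{\ast}_{0})$ annihilates cosines and acts as multiplication by $2$ on sines. Consequently the Fredholm product retains only the sine modes, one for each $k \geq 1$, and reduces to (\ref{E:2.44}), that is, $2\sum_{k=1}^{\infty}\log\bigl(1 + \frac{2}{e^{ak/\rho}-1}\bigr)$.

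Finally I would substitute the known formula (\ref{E:2.45}) for $\log\Det^{\ast}\Delta_{\mathbb T}$ (Theorem 8.3 of \cite{FG}) into the relation above; adding the two contributions yields the stated expression for $\log\Det^{\ast}\Delta_{{\mathbb K}}$. The work is essentially bookkeeping assembly of the ingredients: the only conceptual care needed is to verify that the Fredholm determinant in Theorem \ref{Theorem:2.7} really acts on the orthogonal complement of harmonics (so the constant-function mode, where $\Delta_{M}^{0}$ vanishes and the kernel of $I - \tfrac12(\varphi^{\ast}+(\varphi^{-1})^{\ast})$ would cause trouble, is correctly excluded) and that the $q-1$ slots contribute nothing in the scalar setting. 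Once these points are checked, the formula follows by direct substitution.
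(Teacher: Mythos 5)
Your proposal is correct and follows essentially the same route as the paper: specialize Theorem \ref{Theorem:2.7} using (\ref{E:2.37}) to obtain (\ref{E:2.42}), diagonalize the Fredholm determinant on the sine/cosine modes via (\ref{E:4.43}) to get (\ref{E:2.44}), and substitute the known torus determinant (\ref{E:2.45}). No gaps.
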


\noindent
{\it Remark:} This result was obtained in \cite{Le} by using the different method proved in \cite{KL4}.

\vspace{0.3 cm}
The co-K\"ahler manifolds are odd dimensional analogue of K\"ahler manifolds (\cite{Bl}, \cite{CDM}, \cite{MP}), and
metric mapping tori of K\"ahler manifolds are good examples of compact co-K\"ahler manifolds (\cite{BO}, \cite{Li}).
The first example of a co-K\"ahler manifold ${\mathbb T}^{2}_{\varphi}$ which is not a product with $S^{1}$ is given in \cite{CDM} as follows. Here ${\mathbb T}^{2} := S^{1} \times S^{1}$, where $S^{1}$ is a circle of radius $1$. We define

\begin{eqnarray}     \label{E:2.46}
\varphi : {\mathbb T}^{2} ~ \rightarrow ~ {\mathbb T}^{2}, \qquad \varphi( e^{i \theta}, e^{i \phi}) = (e^{i \phi}, e^{i (\theta + \pi)} ).
\end{eqnarray}

\noindent
Then, $\varphi$ is a Hermitian isometry and ${\mathbb T}^{2}_{\varphi}$ is defined to be the metric mapping torus associated to $\varphi$, {\it i.e.}
${\mathbb T}^{2}_{\varphi} = {\mathbb T}^{2} \times [0, 2\pi]/(x, y, 0) \sim (y, -x, 2 \pi)$.
The Laplacian and its domain are given as follows.

\begin{eqnarray}         \label{E:2.47}
\Delta_{{\mathbb T}^{2}_{\varphi}} = - \frac{d^{2}}{du^{2}} + \Delta_{{\mathbb T}^{2}}, \quad
\Omega_{{\mathbb T}^{2}_{\varphi}} = \big\{ f : {\mathbb T}^{2} \times {\mathbb R} \rightarrow {\mathbb T}^{2} \times {\mathbb R}
\mid f(x, y, u) = f(\varphi(x, y), u + 2 \pi) \big\}.
\end{eqnarray}

\noindent
Since $b_{0} ~ = ~ \ell^{0}_{\varphi} ~ = ~ 1$ and ${\mathcal S}^{0}_{\varphi}({\mathbb T}^{2}) = \emptyset$, Theorem \ref{Theorem:2.7} shows that

\begin{eqnarray}    \label{E:2.48}
& & \log \Det^{\ast} \Delta_{{\mathbb T}^{2}_{\varphi}} ~ - ~ \log \Det^{\ast} \Delta_{{\mathbb T}^{2} \times S^{1}}  \\
& = &  \log \ddet_{Fr} \left\{ \Id + \frac{2 e^{2 \pi \sqrt{\Delta_{{\mathbb T}^{2}}^{0}}}}{\left( e^{2 \pi \sqrt{\Delta_{{\mathbb T}^{2}}^{0}}} - \Id \right)^{2}}
\left( \Id - \frac{\varphi^{\ast}_{0} + (\varphi^{-1})^{\ast}_{0}}{2} \right) \right\}\big|_{{\mathcal H}^{0}({\mathbb T}^{2})^{\perp}}.    \nonumber
\end{eqnarray}

\noindent
We denote $\psi_{m, n}( e^{i \theta}, e^{i \phi}) = e^{i m \theta} e^{i n \phi}$ for $(m, n) \in {\mathbb Z} \times {\mathbb Z}$, which is an eigenfunction of $\Delta_{{\mathbb T}^{2}}$ with eigenvalue $m^{2} + n^{2}$. It follows that

\begin{eqnarray}    \label{E:2.49}
\bigg(\varphi^{\ast}_{0} \psi_{m, n}\bigg)( e^{i \theta}, e^{i \phi}) = (-1)^{n} \psi_{n, m}(e^{i \theta}, e^{i \phi}), \quad
\bigg((\varphi^{-1})^{\ast}_{0} \psi_{m, n}\bigg)( e^{i \theta}, e^{i \phi}) = (-1)^{m} \psi_{n, m}(e^{i \theta}, e^{i \phi}),
\end{eqnarray}

\noindent
which shows that $\Id - \frac{\varphi^{\ast}_{0} + (\varphi^{-1})^{\ast}_{0}}{2}$ has three eigenvalues $\kappa = 0, 1, 2$. The corresponding eigenfunctions are given as follows.

\begin{eqnarray}   \label{E:2.50}
& & \kappa = 0 : \bigg\{ \psi_{2k, 2k} \mid k \in {\mathbb Z} \bigg\}  \cup
\bigg\{ \psi_{2m, 2n} + \psi_{2n, 2m}, ~ \psi_{2m+1, 2n+1} - \psi_{2n+1, 2m+1} \mid n, m \in {\mathbb Z}, n \neq m \bigg\}, \\
& & \kappa = 1 : \bigg\{ \psi_{2m, 2n+1}, ~ \psi_{2n+1, 2m} \mid n, m \in {\mathbb Z} \bigg\},    \nonumber  \\
& & \kappa = 2 : \bigg\{~ \psi_{2k+1, 2k+1} \mid k \in {\mathbb Z} ~ \bigg\} ~ \cup ~  \bigg\{ \psi_{2m, 2n} - \psi_{2n, 2m}, ~ \psi_{2m+1, 2n+1} + \psi_{2n+1, 2m+1} \mid n, m \in {\mathbb Z}, n \neq m \bigg\} .    \nonumber
\end{eqnarray}

\vspace{0.3 cm}

\noindent
Simple computation leads to the following result.

\begin{eqnarray}   \label{E:2.51}
& & \log \Det^{\ast} \Delta_{{\mathbb T}^{2}_{\varphi}} ~ - ~ \log \Det^{\ast} \Delta_{{\mathbb T}^{2} \times S^{1}}  ~ = ~  2 \sum_{(0, 0) \neq (m, n) \in {\mathbb Z} \times {\mathbb Z}} \log \left( 1 + \frac{2}{e^{2 \pi \sqrt{m^{2} + n^{2}}} - 1} \right)  \\
& & \hspace{2.0 cm} + ~ 2 \sum_{(m, n) \in {\mathbb Z} \times {\mathbb Z}} \log \left( 1 - \frac{2 e^{2 \pi \sqrt{4 m^{2} + (2n + 1)^{2}}}}{\left(e^{2 \pi \sqrt{4 m^{2} + (2n + 1)^{2}}} + 1\right)^{2}} \right)
~  - ~ 4 \sum_{m=1}^{\infty} \log \left( 1 + \frac{2}{e^{4 \sqrt{2} \pi m} - 1} \right).   \nonumber
\end{eqnarray}

\vspace{0.3 cm}
\noindent
It is shown in Theorem 7.1 of \cite{FG} that

\begin{eqnarray}    \label{E:2.52}
\log \Det^{\ast} \Delta_{{\mathbb T}^{2} \times S^{1}} & = & 2 \log 2 \pi ~ + ~ 2 \pi \zeta_{{\mathbb T}^{2}}\left( - \frac{1}{2} \right)
~ + ~ 2 \sum_{(0,0) \neq (m, n) \in {\mathbb Z} \times {\mathbb Z}} \log \left( 1 - e^{- 2 \pi \sqrt{m^{2} + n^{2}}} \right).
\end{eqnarray}

\noindent
We finally obtain the following result.

\begin{theorem}     \label{Theorem:2.9}
The zeta-determinant of the scalar Laplacian on ${\mathbb T}^{2}_{\varphi}$ is given as follows.

\begin{eqnarray*}
\log \Det^{\ast} \Delta_{{\mathbb T}^{2}_{\varphi}}  & = & 2 \log 2 \pi ~ + ~ 2 \pi \zeta_{{\mathbb T}^{2}}\left( - \frac{1}{2} \right)
~ + ~ 2 \sum_{(0, 0) \neq (m, n) \in {\mathbb Z} \times {\mathbb Z}} \log \left( 1 + e^{- 2 \pi \sqrt{m^{2} + n^{2}}} \right) \\
& & + ~ 2 \sum_{(m, n) \in {\mathbb Z} \times {\mathbb Z}} \log \left( 1 - \frac{2 e^{2 \pi \sqrt{4 m^{2} + (2n + 1)^{2}}}}{\left(e^{2 \pi \sqrt{4 m^{2} + (2n + 1)^{2}}} + 1\right)^{2}} \right) ~ - ~
4 \sum_{m=1}^{\infty} \log \left( 1 + \frac{2}{e^{4 \sqrt{2} \pi m} - 1} \right).
\end{eqnarray*}
\end{theorem}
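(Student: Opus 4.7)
The plan is to combine equations (2.51) and (2.52) and simplify, using a single algebraic identity to merge two of the resulting sums. All of the serious analytic work has been done in reaching (2.51): the Fredholm determinant of Theorem \ref{Theorem:2.7} has been evaluated on the joint eigenspaces of $\Delta_{{\mathbb T}^{2}}$ and $\Id - \frac{1}{2}(\varphi^{\ast}_{0} + (\varphi^{-1})^{\ast}_{0})$ listed in (2.50), and the closed form (2.52) for $\log\Det^{\ast}\Delta_{{\mathbb T}^{2} \times S^{1}}$ is quoted from \cite{FG}; what remains is a short manipulation.

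Adding the right-hand sides of (2.51) and (2.52) term by term produces the constant $2\log 2\pi$, the $2\pi\zeta_{{\mathbb T}^{2}}(-1/2)$ term, the middle sum over $(m,n) \in {\mathbb Z} \times {\mathbb Z}$, and the correction $-4\sum_{m=1}^{\infty}\log(1 + 2/(e^{4\sqrt{2}\pi m}-1))$ already in the form stated in the theorem, together with the pair
\[
2\sum_{(0,0) \neq (m,n)} \log(1 - e^{-2\pi\sqrt{m^{2}+n^{2}}}) \;+\; 2\sum_{(0,0) \neq (m,n)} \log\!\left(1 + \frac{2}{e^{2\pi\sqrt{m^{2}+n^{2}}}-1}\right).
\]
The one non-trivial point is to collapse these two sums into the single expression $2\sum \log(1 + e^{-2\pi\sqrt{m^{2}+n^{2}}})$ appearing in the second line of the theorem. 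I would handle it with the elementary identity
\[
(1 - e^{-x})\!\left(1 + \frac{2}{e^{x}-1}\right) = \frac{e^{x}-1}{e^{x}} \cdot \frac{e^{x}+1}{e^{x}-1} = 1 + e^{-x} \qquad (x > 0),
\]
applied term by term on the common index set $\{(m,n) \in {\mathbb Z} \times {\mathbb Z} : (m,n) \neq (0,0)\}$ with $x = 2\pi\sqrt{m^{2}+n^{2}}$; since $(m,n) \neq (0,0)$ forces $x > 0$, taking logarithms and adding is legitimate and yields precisely the desired collapse.

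Since the algebra is mechanical once (2.51) and (2.52) are in hand, there is no real obstacle at the level of proving Theorem \ref{Theorem:2.9} itself; the principal difficulty, already absorbed into the derivation of (2.51), was the explicit diagonalization (2.50) of $\Id - \frac{1}{2}(\varphi^{\ast}_{0} + (\varphi^{-1})^{\ast}_{0})$ and the bookkeeping of multiplicities needed to match the three $\kappa$-eigenspaces against the spectrum of $\Delta_{{\mathbb T}^{2}}$, including the extraction of the diagonal $\psi_{2k+1,2k+1}$ family that is responsible for the final $-4\sum_{m=1}^{\infty}$ correction term.
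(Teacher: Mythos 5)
Your proposal is correct and is essentially the paper's own (largely implicit) argument: Theorem \ref{Theorem:2.9} is obtained by adding (\ref{E:2.51}) and (\ref{E:2.52}), and the only step requiring comment is exactly the term-by-term merger of $2\sum\log(1-e^{-2\pi\sqrt{m^{2}+n^{2}}})$ with $2\sum\log\bigl(1+\tfrac{2}{e^{2\pi\sqrt{m^{2}+n^{2}}}-1}\bigr)$ via the identity $(1-e^{-x})\bigl(1+\tfrac{2}{e^{x}-1}\bigr)=1+e^{-x}$, which you state and justify correctly on the common index set $(m,n)\neq(0,0)$.
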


\vspace{0.3 cm}

\section{The analytic torsion on a metric mapping torus}

\vspace{0.2 cm}

In this section we use Theorem \ref{Theorem:2.7} to compute the analytic torsion $\log T(M_{\varphi})$ of a metric mapping torus.
Simple computation shows that

\begin{eqnarray}    \label{E:3.1}
& & - \frac{1}{2} \sum_{q=0}^{m} (-1)^{q+1} \cdot q \cdot (b_{q} + b_{q-1} - \ell^{q}_{\varphi} - \ell^{q-1}_{\varphi}) \cdot \log \frac{a^{2}}{2} ~ = ~
- \frac{1}{2} \sum_{q=0}^{m-1} (-1)^{q} \cdot (b_{q} - \ell^{q}_{\varphi}) \cdot \log \frac{a^{2}}{2},  \\
& & \frac{1}{2} \sum_{q=0}^{m} (-1)^{q+1} \cdot q \cdot \log \ddet_{Fr} \left( \Id - \frac{1}{2} \left( {\widetilde \varphi^{\ast}_{q}}  + ~ \widetilde{(\varphi^{-1})}^{\ast}_{q} \right) \right)\big|_{{\widetilde {\mathcal S}_{\varphi}^{q}}(M)}   \nonumber \\
& = & \frac{1}{2} \sum_{q=0}^{m} (-1)^{q+1} \cdot q \cdot \log \ddet_{Fr} \left( \Id - \frac{1}{2} \left( \varphi^{\ast}_{q}  + ~ (\varphi^{-1})^{\ast}_{q} \right) \right)\big|_{{\mathcal S_{\varphi}^{q}}(M)}   \nonumber \\
& & \hspace{1.0 cm} + ~ \frac{1}{2} \sum_{q=0}^{m} (-1)^{q+1} \cdot q \cdot \log \ddet_{Fr} \left( \Id - \frac{1}{2} \left( \varphi^{\ast}_{q-1}  + ~ (\varphi^{-1})^{\ast}_{q-1} \right) \right)\big|_{{\mathcal S}_{\varphi}^{q-1}(M)}   \nonumber \\
& = & \frac{1}{2} \sum_{q=0}^{m-1} (-1)^{q} \cdot \log \ddet_{Fr} \left( \Id - \frac{1}{2} \left( \varphi^{\ast}_{q}  + ~ (\varphi^{-1})^{\ast}_{q} \right) \right)\big|_{{\mathcal S}_{\varphi}^{q}(M)}.   \nonumber
 \nonumber
\end{eqnarray}

\noindent
By the Hodge decomposition, $\Omega^{q}(M)$ is decomposed into

\begin{eqnarray}   \label{E:3.2}
\Omega^{q}(M) & = & \Omega^{q}_{-}(M) \oplus {\mathcal H}^{q}(M) \oplus \Omega^{q}_{+}(M),
\end{eqnarray}

\noindent
where $\Omega^{q}_{-}(M) = \Imm d \cap \Omega^{q}(M)$ and $\Omega^{q}_{+}(M) = \Imm d^{\ast} \cap \Omega^{q}(M)$.
Since $\varphi : M \rightarrow M$ is an isometry, $\varphi^{\ast}_{q}$ and $(\varphi^{-1})^{\ast}_{q}$ preserve $\Omega^{q}_{\pm}(M)$.
We denote by $\Delta_{M}^{q, \pm}$, $\varphi^{\ast, \pm}_{q}$,  $(\varphi^{-1})^{\ast, \pm}_{q}$ the restrictions of $\Delta_{M}^{q}$, $\varphi^{\ast}_{q}$,  $(\varphi^{-1})^{\ast}_{q}$ to $\Omega^{q, \pm}(M)$.
Since $\Delta_{M}^{q}$ and $(\varphi^{i})^{\ast}_{q}$ ($ i = 1, -1$) commute with $d$, it follows that $\Delta_{M}^{q, +}$ and $(\varphi^{i})^{\ast, +}_{q}$ are isospectral with $\Delta_{M}^{q+1, -}$ and $(\varphi^{i})^{\ast, -}_{q+1}$, respectively.
Taking these facts into consideration, we have the following equalities.

\begin{eqnarray}    \label{E:3.3}
& & \frac{1}{2} \sum_{q=0}^{m} (-1)^{q+1} \cdot q \cdot \log \ddet_{Fr} \left\{ \Id + \frac{2 e^{a \sqrt{{\widetilde \Delta_{M}^{q}}}}}{\left( e^{a \sqrt{{\widetilde \Delta_{M}^{q}}}} - \Id \right)^{2}}
\left( \Id - \frac{{\widetilde \varphi}^{\ast}_{q} + \widetilde{(\varphi^{-1})}^{\ast}_{q}}{2} \right) \right\}\big|_{{\widetilde {\mathcal H}}^{q}(M)^{\perp}}   \\
& = & \frac{1}{2} \sum_{q=0}^{m} (-1)^{q+1} \cdot q \cdot \log \ddet_{Fr} \left\{ \Id + \frac{2 e^{a \sqrt{\Delta_{M}^{q}}}}{\left( e^{a \sqrt{\Delta_{M}^{q}}} - \Id \right)^{2}}
\left( \Id - \frac{\varphi^{\ast}_{q} + (\varphi^{-1})^{\ast}_{q}}{2} \right) \right\}\big|_{{\mathcal H}^{q}(M)^{\perp}} \nonumber   \\
&  & + ~ \frac{1}{2} \sum_{q=0}^{m} (-1)^{q+1} \cdot q \cdot \log \ddet_{Fr} \left\{ \Id + \frac{2 e^{a \sqrt{\Delta_{M}^{q-1}}}}{\left( e^{a \sqrt{\Delta_{M}^{q-1}}} - \Id \right)^{2}}
\left( \Id - \frac{\varphi^{\ast}_{q-1} + (\varphi^{-1})^{\ast}_{q-1}}{2} \right) \right\}\big|_{{\mathcal H}^{q-1}(M)^{\perp}} \nonumber  \\
& = & \frac{1}{2} \sum_{q=0}^{m} (-1)^{q} \cdot \log \ddet_{Fr} \left\{ \Id + \frac{2 e^{a \sqrt{\Delta_{M}^{q}}}}{\left( e^{a \sqrt{\Delta_{M}^{q}}} - \Id \right)^{2}}
\left( \Id - \frac{\varphi^{\ast}_{q} + (\varphi^{-1})^{\ast}_{q}}{2} \right) \right\}\big|_{{\mathcal H}^{q}(M)^{\perp}} \nonumber   \\
& = & \frac{1}{2} \sum_{q=0}^{m} (-1)^{q} \cdot \log \ddet_{Fr} \left\{ \Id + \frac{2 e^{a \sqrt{\Delta_{M}^{q, -}}}}{\left( e^{a \sqrt{\Delta_{M}^{q, -}}} - \Id \right)^{2}}
\left( \Id - \frac{\varphi^{\ast}_{q, -} + (\varphi^{-1})^{\ast}_{q, -}}{2} \right) \right\} \nonumber   \\
&  & + ~ \frac{1}{2} \sum_{q=0}^{m} (-1)^{q} \cdot \log \ddet_{Fr} \left\{ \Id + \frac{2 e^{a \sqrt{\Delta_{M}^{q, +}}}}{\left( e^{a \sqrt{\Delta_{M}^{q, +}}} - \Id \right)^{2}}
\left( \Id - \frac{\varphi^{\ast}_{q, +} + (\varphi^{-1})^{\ast}_{q, +}}{2} \right) \right\}  \nonumber \\
& = & 0.  \nonumber
\end{eqnarray}

\vspace{0.2 cm}
\noindent
When $M_{1}$ and $M_{2}$ are compact orienred Riemannian manifolds, it is well known ((7.4) in \cite{Mu}) that

\begin{eqnarray}    \label{E:3.5}
\log T(M_{1} \times M_{2}) & = & \chi(M_{2}) \log T(M_{1}) + \chi(M_{1}) \log T(M_{2}).
\end{eqnarray}

\noindent
When $S^{1}(r)$ is a round circle of radius $r>0$, the Laplacian $\Delta_{S^{1}(r)}$ and its associated zeta function $\zeta_{S^{1}(r)}(s)$ are given by

\begin{eqnarray}    \label{E:3.6}
\Delta_{S^{1}(r)} & = & - \frac{1}{r^{2}} \frac{\partial^{2}}{\partial \theta^{2}}, \qquad
\zeta_{S^{1}(r)}(s) ~ = ~ 2 \sum_{k=1}^{\infty} \left( \frac{k^{2}}{r^{2}} \right)^{-s} ~ = ~ 2 r^{2s} \zeta_{R}(2s),
\end{eqnarray}

\noindent
which shows that

\begin{eqnarray}    \label{E:3.7}
\zeta_{S^{1}(r)}^{\prime}(0) & = & 4 \log r ~ \zeta_{R}(0) + 4 \zeta_{R}^{\prime}(0) ~ = ~ - 2 \log r - 2 \log 2 \pi ~ = ~ -2 \log 2 \pi r.
\end{eqnarray}

\noindent
Hence, it follows that

\begin{eqnarray}     \label{E:3.8}
& & T(S^{1}(r)) ~ = ~ \frac{1}{2} \log \Det^{\ast} \Delta_{S^{1}(r)} ~ = ~  \log 2 \pi r,  \quad
\log T \left(M \times S^{1}(\frac{a}{2\pi}) \right) ~ = ~ \chi(M) \cdot \log a.
\end{eqnarray}

\noindent
Summarizing the above argument, we have the following result.

\begin{theorem}   \label{Theorem:3.1}
Let $\varphi : (M, g^{M}) \rightarrow (M, g^{M})$ be an isometry on a compact oriented Riemannian manifold $(M, g^{M})$ and
$M_{\varphi}$ be the metric mapping torus associated to $\varphi$. Then,

\begin{eqnarray*}
\log T(M_{\varphi}) & = &  \frac{1}{2} \log 2  \cdot \chi(M) + \frac{1}{2} \log \frac{a^{2}}{2} \cdot \sum_{q=0}^{m-1} (-1)^{q} \ell^{q}_{\varphi}  \\
& & ~  + ~  \frac{1}{2} ~ \sum_{q=0}^{m-1} (-1)^{q} \log \ddet_{Fr} \left( \Id - \frac{1}{2} \left( \varphi^{\ast}_{q} + (\varphi^{-1})^{\ast}_{q} \right) \right)\big|_{{\mathcal S}^{q}_{\varphi}(M)}.
\end{eqnarray*}
In particular, if $\Dim M$ is odd and $\varphi$ is orientation preserving, then $\log T(M_{\varphi}) = 0$.
\end{theorem}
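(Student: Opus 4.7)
I would substitute the expression for $\log \Det^{\ast} \Delta^{q}_{M_{\varphi}}$ supplied by Theorem \ref{Theorem:2.7} into the definition $\log T(M_{\varphi}) = \frac{1}{2}\sum_{q=0}^{m} (-1)^{q+1} q \log \Det^{\ast} \Delta^{q}_{M_{\varphi}}$, which splits the torsion into four alternating sums. Three of the four simplify by purely algebraic reindexing of the shifted $(q-1)$ terms, while the fourth requires the spectral symmetry argument already set up in (\ref{E:3.3}).

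For the numerical pieces, the $-\log(a^{2}/2)$ contribution carries the factor $b_{q}+b_{q-1}-\ell^{q}_{\varphi}-\ell^{q-1}_{\varphi}$. Reindexing $q\mapsto q+1$ in the $(q-1)$ summands, as in (\ref{E:3.1}), collapses this to
\begin{eqnarray*}
-\frac{1}{2}\sum_{q=0}^{m-1}(-1)^{q}\bigl(b_{q}-\ell^{q}_{\varphi}\bigr)\log\frac{a^{2}}{2}.
\end{eqnarray*}
The contribution from $\log\Det^{\ast}\Delta^{q}_{M\times S^{1}(a/(2\pi))}$ assembles into $\log T(M\times S^{1}(a/(2\pi)))$; by the product formula (\ref{E:3.5}) together with $\chi(S^{1})=0$ and $\log T(S^{1}(a/(2\pi)))=\log a$ from (\ref{E:3.7})--(\ref{E:3.8}), this equals $\chi(M)\log a$. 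Using $\chi(M)=\sum_{q}(-1)^{q}b_{q}$, the combined $b_{q}$-dependence simplifies to $\chi(M)(-\log a+\frac{1}{2}\log 2+\log a)=\frac{1}{2}\chi(M)\log 2$, while the $\ell^{q}_{\varphi}$-dependence becomes $\frac{1}{2}\log(a^{2}/2)\sum_{q}(-1)^{q}\ell^{q}_{\varphi}$, which is precisely the first two terms of the theorem.

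For the Fredholm-determinant piece on $\widetilde{\mathcal S}^{q}_{\varphi}(M)$, the block-diagonal structure splits the determinant into factors on $\mathcal S^{q}_{\varphi}(M)$ and $\mathcal S^{q-1}_{\varphi}(M)$; the same reindexing as above produces the third term of the theorem. The main obstacle is the Fredholm-determinant piece on $\widetilde{\mathcal H}^{q}(M)^{\perp}$: I must show its weighted alternating sum vanishes. Here I would use the Hodge decomposition $\Omega^{q}(M)=\Omega^{q}_{-}(M)\oplus\mathcal H^{q}(M)\oplus\Omega^{q}_{+}(M)$ and the fact that, since $\varphi$ is an isometry commuting with $d$ and $d^{\ast}$, the pullbacks $(\varphi^{\pm 1})^{\ast}_{q}$ preserve each summand. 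The exterior derivative $d\colon\Omega^{q,+}(M)\to\Omega^{q+1,-}(M)$ is an isomorphism intertwining $(\Delta^{q,+}_{M},\varphi^{\ast,+}_{q})$ with $(\Delta^{q+1,-}_{M},\varphi^{\ast,-}_{q+1})$, so the Fredholm determinants over these two spaces agree. The alternating sum then telescopes to zero, exactly as in (\ref{E:3.3}). Assembling all four contributions yields the displayed formula.

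For the special case, if $\dim M$ is odd then $\chi(M)=0$ by Poincaré duality. If $\varphi$ is also orientation-preserving, the Hodge star commutes with $\varphi^{\ast}$, inducing isomorphisms $\mathcal H^{q}_{\varphi}(M)\cong\mathcal H^{m-1-q}_{\varphi}(M)$ and $\mathcal S^{q}_{\varphi}(M)\cong\mathcal S^{m-1-q}_{\varphi}(M)$ that intertwine $\varphi^{\ast}_{q}+(\varphi^{-1})^{\ast}_{q}$ with its counterpart in complementary degree. Since $m-1$ is odd, the pairing $q\leftrightarrow m-1-q$ matches opposite signs with no fixed index, so both alternating sums in the theorem cancel in pairs and $\log T(M_{\varphi})=0$.
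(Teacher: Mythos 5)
Your proposal is correct and follows essentially the same route as the paper: substitute Theorem \ref{Theorem:2.7} into the definition of the torsion, reindex the shifted $(q-1)$ blocks as in (\ref{E:3.1}), evaluate $\log T(M\times S^{1}(\frac{a}{2\pi}))=\chi(M)\log a$ via (\ref{E:3.5})--(\ref{E:3.8}), and kill the $\widetilde{\mathcal H}^{q}(M)^{\perp}$ contribution by the Hodge decomposition and the $d$-intertwining of $(\Delta^{q,+}_{M},\varphi^{\ast,+}_{q})$ with $(\Delta^{q+1,-}_{M},\varphi^{\ast,-}_{q+1})$, exactly as in (\ref{E:3.3}). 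Your Poincar\'e-duality argument for the vanishing in the odd-dimensional orientation-preserving case is also correct (the paper states this without proof).
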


\vspace{0.2 cm}
In the remaining part of this section, we are going to compute the analytic torsion for the Witten deformed Laplacian (\cite{Wi}) and recover the result of J. Marcsik given in \cite{Ma}.
Since a mapping torus is a fiber bundle over $S^{1}$, we write the fiber bundle by $p : M_{\varphi} \rightarrow S^{1}(\frac{a}{2 \pi})$.
Let $d \theta$ be the standard one form on $S^{1}(\frac{a}{2 \pi})$ and put $du = p^{\ast} d \theta$.
Setting $d_{q}(t) := d + t du \wedge  : \Omega^{q}(M_{\varphi}) \rightarrow \Omega^{q+1}(M_{\varphi})$, it follows that $d_{q+1}(t) d_{q}(t) = 0$.
We consider a complex  $\left( \Omega^{\ast}(M_{\varphi}), d(t) \right)$ with $\Delta^{q}_{M_{\varphi}}(t) := d^{\ast}_{q}(t) d_{q}(t) + d_{q-1}(t) d^{\ast}_{q-1}(t)$.
Since $d_{q}^{\ast}(t) = d^{\ast} + t \iota_{\frac{\partial}{\partial u}}$, simple computation shows that

\begin{eqnarray}     \label{E:3.9}
\Delta^{q}_{M_{\varphi}}(t) & = & \Delta^{q}_{M_{\varphi}} + t^{2}.
\end{eqnarray}

\noindent
We define the analytic torsion $T(M_{\varphi}, t)$ with respect to $\Delta^{q}_{M_{\varphi}}(t)$ by

\begin{eqnarray}   \label{E:3.10}
\log T \left( M_{\varphi}, t \right) & = & \frac{1}{2} \sum_{q=0}^{m} (-1)^{q+1} \cdot q \cdot \log \Det \left(\Delta_{M_{\varphi}}^{q} + t^{2} \right).
\end{eqnarray}

\noindent
To describe the result of J. Marcsik, we define the Lefschetz number as follows. Let $F : M \rightarrow M$ be a continuous map with the induced homomorphism
$F^{\ast}_{q} : H^{q}(M, {\mathbb R}) \rightarrow H^{q}(M, {\mathbb R})$.  The Lefschetz number $L(F)$ and
Lefschetz zeta function $\zeta_{F}(t)$ are defined by

\begin{eqnarray}     \label{E:3.11}
L(F) & = & \sum_{q=0}^{m-1} (-1)^{q} \Tr F^{\ast}_{q}, \qquad
\log \zeta_{F}(t) = \sum_{k=1}^{\infty} L(F^{k}) \frac{t^{k}}{k}.
\end{eqnarray}

\begin{lemma}  \label{Lemma:3.2}
If $\psi : M \rightarrow M$ is an isometry, then $L(\psi) = L(\psi^{-1})$.
\end{lemma}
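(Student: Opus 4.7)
The plan is to use Hodge theory to realize the action of $\psi^{\ast}_{q}$ on $H^{q}(M, \mathbb{R})$ as the action of $\psi^{\ast}$ on the finite-dimensional space $\mathcal{H}^{q}(M)$ of harmonic $q$-forms, and then exploit that isometries act \emph{orthogonally} on harmonic forms with respect to the $L^{2}$ inner product.

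First I would note that since $\psi$ is an isometry, $\psi^{\ast}$ commutes with $\Delta_{M}^{q}$ and with the Hodge star, hence preserves $\mathcal{H}^{q}(M)$. By the Hodge isomorphism $\mathcal{H}^{q}(M) \cong H^{q}(M, \mathbb{R})$ (which is natural with respect to isometries), the induced map $\psi^{\ast}_{q}$ on cohomology is conjugate to $\psi^{\ast}|_{\mathcal{H}^{q}(M)}$, so
\begin{equation*}
\Tr \psi^{\ast}_{q} \;=\; \Tr \bigl( \psi^{\ast}|_{\mathcal{H}^{q}(M)} \bigr).
\end{equation*}

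Next I would observe that $\psi^{\ast}$ preserves the $L^{2}$ inner product on $\Omega^{q}(M)$, since $\psi$ is an isometry and the metric on $\wedge^{q} T^{\ast}M$ is induced from $g^{M}$. Therefore $\psi^{\ast}|_{\mathcal{H}^{q}(M)}$ is an orthogonal transformation of the finite-dimensional Euclidean space $\mathcal{H}^{q}(M)$. For any orthogonal operator $O$, one has $O^{-1} = O^{\top}$, and consequently
\begin{equation*}
\Tr \bigl( (\psi^{\ast}|_{\mathcal{H}^{q}(M)})^{-1} \bigr) \;=\; \Tr \bigl( (\psi^{\ast}|_{\mathcal{H}^{q}(M)})^{\top} \bigr) \;=\; \Tr \bigl( \psi^{\ast}|_{\mathcal{H}^{q}(M)} \bigr).
\end{equation*}
Since $(\psi^{-1})^{\ast}_{q} = (\psi^{\ast}_{q})^{-1}$ on $H^{q}(M, \mathbb{R})$, this gives $\Tr (\psi^{-1})^{\ast}_{q} = \Tr \psi^{\ast}_{q}$ for every $q$.

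Summing the alternating trace over $q = 0, \ldots, m-1$ then yields $L(\psi^{-1}) = L(\psi)$. There is no real obstacle here; the only subtlety worth flagging is the identification of the cohomological trace with the trace on harmonic forms, which is standard Hodge theory, and the equivalent diagonalization viewpoint (eigenvalues of $\psi^{\ast}$ on $\mathcal{H}^{q}(M) \otimes \mathbb{C}$ lie on the unit circle and come in complex-conjugate pairs, so the trace is invariant under $\psi \leftrightarrow \psi^{-1}$) gives the same conclusion.
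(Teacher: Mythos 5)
Your argument is correct and is essentially identical to the paper's proof: both identify $\psi^{\ast}_{q}$ on $H^{q}(M,\mathbb{R})$ with its action on the harmonic forms ${\mathcal H}^{q}(M)$, observe that an isometry acts orthogonally there with respect to the $L^{2}$ inner product, and conclude $\Tr \psi^{\ast}_{q} = \Tr (\psi^{-1})^{\ast}_{q}$ since the inverse of an orthogonal matrix is its transpose. No differences worth noting.
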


\begin{proof}
We note that $\psi^{\ast}_{q} : H^{q}(M, {\mathbb R}) \rightarrow H^{q}(M, {\mathbb R})$ can be identified with $\psi^{\ast}_{q} : {\mathcal H}^{q}(M) \rightarrow {\mathcal H}^{q}(M)$, where
${\mathcal H}^{q}(M)$ is the set of harmonic $q$-forms. Since $\psi$ is an isometry, it follows that for any $v \in {\mathcal H}^{q}(M)$, $\parallel v \parallel = \parallel \psi^{\ast}_{q} v \parallel$.
Hence, for any orthonormal basis for ${\mathcal H}^{q}(M)$, $\psi^{\ast}_{q}$ is expressed by an orthogonal matrix, whose inverse is equal to its transpose. This shows that $\Tr \psi^{\ast}_{q} = \Tr (\psi^{-1})^{\ast}_{q}$, and
the result follows.
\end{proof}

We are going to express $\log T \left( M_{\varphi}, t \right)$ in terms of the Euler characteristic $\chi(M)$ and Lefschetz zeta function $\zeta_{\varphi}(t)$ for $\varphi : M \rightarrow M$. The second equality of Theorem \ref{Theorem:2.5} shows that

\begin{eqnarray}   \label{E:3.12}
\log \Det \left(\Delta_{M_{\varphi}}^{q} + t^{2} \right) & = & \log \Det \left(\Delta_{M \times S^{1}(\frac{a}{2 \pi})}^{q} + t^{2} \right) \\
& & \hspace{0.5 cm} + ~
\log \ddet_{Fr} \left\{ \Id + \frac{2 e^{a\sqrt{{\widetilde \Delta}^{q}_{M} + t^{2}}}}{\left( e^{a\sqrt{{\widetilde \Delta}^{q}_{M} + t^{2}}} - \Id \right)^{2}}
\left( \Id - \frac{{\widetilde \varphi}_{q}^{\ast} + (\widetilde{\varphi^{-1}})_{q}^{\ast}}{2} \right)  \right\},  \nonumber
\end{eqnarray}

\noindent
which leads to

\begin{eqnarray}     \label{E:3.13}
\log T \left( M_{\varphi}, t \right) & = & \log T \left( M \times S^{1}\left(\frac{a}{2 \pi}\right), t \right) \\
& + & \frac{1}{2} \sum_{q=0}^{m} (-1)^{q+1} \cdot q \cdot
\log \ddet_{Fr} \left\{ \Id + \frac{2 e^{a\sqrt{{\widetilde \Delta}^{q}_{M} + t^{2}}}}{\left( e^{a\sqrt{{\widetilde \Delta}^{q}_{M} + t^{2}}} - \Id \right)^{2}}
\left( \Id - \frac{\widetilde{\varphi}_{q}^{\ast} + (\widetilde{\varphi^{-1}})_{q}^{\ast}}{2} \right)  \right\}.    \nonumber
\end{eqnarray}

\noindent
It is a well known fact ((7.4) in \cite{Mu}) that

\begin{eqnarray}      \label{E:3.14}
\log T \left( M \times S^{1}\left(\frac{a}{2 \pi}\right), t \right) & = & \chi(M) \cdot \log T \left( S^{1}\left(\frac{a}{2 \pi}\right), t \right) ~ + ~ \chi \left( S^{1}\left(\frac{a}{2 \pi}\right)\right) \cdot
\log T ( M , t)     \nonumber \\
& = & \frac{1}{2} \chi(M) \cdot \log \Det \left(\Delta_{S^{1}(\frac{a}{2 \pi})} + t^{2} \right).
\end{eqnarray}

\begin{lemma}    \label{Lemma:3.3}
\begin{eqnarray*}
\log \Det \left( \Delta_{S^{1}\left(\frac{a}{2 \pi}\right)} + t^{2} \right) & = &  a t ~ + ~ 2 \log ( 1 - e^{-at}) .
\end{eqnarray*}
\end{lemma}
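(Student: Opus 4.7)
The plan is to differentiate $\log\Det\bigl(\Delta_{S^{1}(a/(2\pi))} + t^{2}\bigr)$ with respect to $t$, recognize the resulting series as a Mittag--Leffler expansion of $\coth$, antidifferentiate in $t$, and pin down the constant of integration from the $t\to 0^{+}$ asymptotics using the modified zeta-determinant value already recorded in (\ref{E:3.7}).

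The spectrum of $\Delta_{S^{1}(a/(2\pi))}$ is $\{0\}$ (simple) together with $\{4\pi^{2}k^{2}/a^{2} : k\ge 1\}$ (each of multiplicity $2$), so for $t>0$ the operator $\Delta_{S^{1}(a/(2\pi))}+t^{2}$ is positive and its zeta function
\[
\zeta(s,t) \;=\; t^{-2s} + 2\sum_{k=1}^{\infty}\Bigl(\tfrac{4\pi^{2}k^{2}}{a^{2}}+t^{2}\Bigr)^{-s}
\]
is holomorphic at $s=0$. Differentiating in $t$ term-by-term is legitimate in the half-plane $\re s>1/2$, where the series is absolutely convergent; since the resulting identity is an equality of functions meromorphic in $s$, I may then apply $-\partial_{s}|_{s=0}$ to obtain
\[
\frac{\partial}{\partial t}\log\Det\bigl(\Delta_{S^{1}(a/(2\pi))}+t^{2}\bigr) \;=\; \frac{2}{t} + \sum_{k=1}^{\infty}\frac{4t}{t^{2}+4\pi^{2}k^{2}/a^{2}}.
\]

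Applying the classical partial-fraction identity $\coth x = \frac{1}{x}+\sum_{k\ge 1}\frac{2x}{x^{2}+k^{2}\pi^{2}}$ at $x=at/2$ collapses the right-hand side to $a\coth(at/2)$. Antidifferentiating in $t$ gives
\[
\log\Det\bigl(\Delta_{S^{1}(a/(2\pi))}+t^{2}\bigr) \;=\; 2\log\sinh(at/2) + C(a)
\]
for some constant $C(a)$ independent of $t$, which via $e^{at/2}-e^{-at/2}=e^{at/2}(1-e^{-at})$ rewrites as $at + 2\log(1-e^{-at}) - 2\log 2 + C(a)$.

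To determine $C(a)$ I let $t\to 0^{+}$. Splitting off the zero mode contributes $2\log t$, and by (\ref{E:3.7}) with $r=a/(2\pi)$ the remainder tends to $\log\Det^{\ast}\Delta_{S^{1}(a/(2\pi))} = 2\log(2\pi r) = 2\log a$, so the left-hand side expands as $2\log(at) + O(t^{2})$. The right-hand side expands as $2\log(at) - 2\log 2 + C(a) + O(t)$, forcing $C(a)=2\log 2$ and yielding the claimed identity $at + 2\log(1-e^{-at})$. The only nontrivial point is the justification of the term-by-term $t$-differentiation under $\zeta$-regularization, but this is the standard Mellin-transform manoeuvre: differentiate the absolutely convergent series for large $\re s$, use meromorphic continuation, and then evaluate $\partial_{s}|_{s=0}$; no singular behaviour can arise because each summand is smooth in $t>0$ and the tail is uniformly Schwartz-class in $k$.
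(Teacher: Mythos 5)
Your proof is correct, but it follows a genuinely different route from the paper. The paper works directly at the level of the zeta function: it writes $\zeta(s)$ as a Mellin transform of the heat trace, applies the Poisson summation formula to the theta series $\sum_{k\ge 1}e^{-4\pi^2k^2u/a^2}$, evaluates the resulting Bessel-type integrals in closed form, and then reads off $-\zeta'(0)$. You instead differentiate $\log\Det$ in the parameter $t$, which kills the need for any continuation gymnastics beyond the observation that $\partial_t\zeta(s,t)$ is $-2s$ times a series already convergent for $\re s>-1/2$, identify the result with $a\coth(at/2)$ via the classical partial-fraction expansion, and antidifferentiate; the price is that you must fix the constant of integration, which you do correctly from the $t\to0^+$ asymptotics using $\log\Det^{\ast}\Delta_{S^1(r)}=2\log 2\pi r$ from (\ref{E:3.7}). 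Both arguments are sound and of comparable length. Your variation-of-parameter method buys a proof that avoids Poisson summation and special-function integrals entirely, at the cost of needing the $t=0$ determinant as external input (which the paper has already computed); the paper's method is self-contained and produces the exact formula in one pass. The only step you should make fully explicit if you write this up is the interchange of $\partial_t$ with the meromorphic continuation and the evaluation at $s=0$: since $\partial_t$ applied to the Dirichlet-series representation improves convergence to $\re s>-1/2$ and the convergence is locally uniform in $t$, the continuation of the derivative agrees with the derivative of the continuation, which is all that is needed.
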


\begin{proof}

\noindent
Since $\Spec \left( \Delta_{S^{1}(\frac{a}{2 \pi})} \right)  =  \left\{ \left( \frac{2 \pi k}{a} \right)^{2} \mid k \in {\mathbb Z} \right\}$, it follows that

\begin{eqnarray*}
\zeta_{\left(\Delta_{S^{1}(\frac{a}{2 \pi})}^{1} + t^{2}\right)}(s) ~ = ~  2 \sum_{k=1}^{\infty} \left( \left( \frac{2 \pi k}{a} \right)^{2} + t^{2}  \right)^{-s}  ~ + ~ t^{-2s}
~ = ~ \frac{2}{\Gamma(s)} \int_{0}^{\infty} u^{s-1} e^{-u t^{2}} \sum_{k=1}^{\infty} e^{- \frac{4 \pi^{2} k^{2}}{a^{2}} u } ~ du ~ + ~  t^{-2s}.
\end{eqnarray*}

\noindent
The Poisson summation formula shows that

\begin{eqnarray*}
\sum_{k=1}^{\infty} e^{- \frac{4 \pi^{2} k^{2}}{a^{2}}u} & = & - \frac{1}{2} ~ + ~ \frac{a}{4 \sqrt{\pi u}} ~ + ~ \frac{a}{2 \sqrt{\pi u}} ~ \sum_{k=1}^{\infty} e^{- \frac{a^{2}k^{2}}{4 u}} ,
\end{eqnarray*}

\noindent
which yields that

\begin{eqnarray*}
\zeta_{\left(\Delta_{S^{1}(\frac{a}{2 \pi})}^{1} + t^{2}\right)}(s) & = &  \frac{a \Gamma(s - \frac{1}{2})}{2 \sqrt{\pi} \Gamma(s)} t^{-2s + 1} ~ + ~
\frac{a}{\sqrt{\pi}} ~\sum_{k=1}^{\infty} \frac{1}{\Gamma(s)} \int_{0}^{\infty} u^{s-\frac{3}{2}} e^{-(ut^{2} + \frac{a^{2} k^{2}}{4u})} ~ du  .
\end{eqnarray*}

\noindent
Hence,

\begin{eqnarray*}
\zeta^{\prime}_{\left(\Delta_{S^{1}(\frac{a}{2 \pi})}^{1} + t^{2} \right)}(0) & = &  - a t ~ + ~
\frac{a}{\sqrt{\pi}} ~\sum_{k=1}^{\infty} \int_{0}^{\infty} u^{-\frac{3}{2}} e^{-(ut^{2} + \frac{a^{2} k^{2}}{4u})} ~ du  ~ = ~ - a t - 2 \log ( 1 - e^{-at}).
\end{eqnarray*}
\end{proof}

\noindent
Using the similar method as in (\ref{E:3.1}) and (\ref{E:3.3}), we have the following equalities.

\begin{eqnarray}   \label{E:3.15}
& & \frac{1}{2} \sum_{q=0}^{m} (-1)^{q+1} \cdot q \cdot
\log \ddet_{Fr} \left\{ \Id + \frac{2 e^{a\sqrt{{\widetilde \Delta}^{q}_{M} + t^{2}}}}{\left( e^{a\sqrt{{\widetilde \Delta}^{q}_{M} + t^{2}}} - \Id \right)^{2}}
\left( \Id - \frac{\widetilde{\varphi}_{q}^{\ast} + (\widetilde{\varphi^{-1}})_{q}^{\ast}}{2} \right)  \right\}    \\
& = & \frac{1}{2} \sum_{q=0}^{m-1} (-1)^{q}
\log \ddet_{Fr} \left\{ \Id + \frac{2 e^{at}}{\left( e^{at} - 1 \right)^{2}} \left( \Id - \frac{\varphi_{q}^{\ast} + (\varphi_{q}^{-1})^{\ast}}{2} \right)  \right\}\bigg|_{{\mathcal H}^{q}(M)}    \nonumber  \\
& = & - \chi(M) \log \left( 1 - e^{-at} \right) ~ + ~
\frac{1}{2} \sum_{q=0}^{m-1} (-1)^{q} \log \ddet_{Fr} \left( \Id - e^{-at} \varphi_{q}^{\ast} \right) \left( \Id - e^{-at} (\varphi_{q}^{-1})^{\ast} \right)    \nonumber  \\
& = & - \chi(M) \log \left( 1 - e^{-at} \right) ~ + ~
\frac{1}{2} \sum_{q=0}^{m-1} (-1)^{q} \Tr \log \left( \Id - e^{-at} \varphi_{q}^{\ast} \right)  ~ + ~
\frac{1}{2} \sum_{q=0}^{m-1} (-1)^{q} \Tr \log \left( \Id - e^{-at} (\varphi_{q}^{-1})^{\ast} \right)     \nonumber  \\
& = & - \chi(M) \log \left( 1 - e^{-at} \right) ~ - ~
\frac{1}{2} \sum_{q=0}^{m-1} (-1)^{q} \Tr \sum_{k=1}^{\infty} \frac{e^{-akt}}{k} (\varphi_{q}^{\ast})^{k}  ~ - ~
\frac{1}{2} \sum_{q=0}^{m-1} (-1)^{q} \Tr \sum_{k=1}^{\infty} \frac{e^{-akt}}{k} ((\varphi_{q}^{-1})^{\ast})^{k}    \nonumber \\
& = & - \chi(M) \log \left( 1 - e^{-at} \right) ~ - ~ \frac{1}{2} \sum_{k=1}^{\infty} \frac{e^{-akt} L(\varphi^{k})}{k} ~ - ~ \frac{1}{2} \sum_{k=1}^{\infty} \frac{e^{-akt} L((\varphi^{-1})^{k})}{k}     \nonumber \\
& = & - \chi(M) \log \left( 1 - e^{-at} \right) ~ - ~ \sum_{k=1}^{\infty} \frac{L(\varphi^{k}) e^{-akt}}{k},   \nonumber
\end{eqnarray}

\noindent
where we used Lemma \ref{Lemma:3.3} in the last equality.
Finally, we obtain the following result.

\begin{theorem}   \label{Theorem:3.4}
Let $\varphi : (M, g^{M}) \rightarrow (M, g^{M})$ be an isometry on a compact oriented Riemannian manifold $(M, g^{M})$ and
$M_{\varphi}$ be the metric mapping torus associated to $\varphi$. Then,

\begin{eqnarray*}
\log T \left( M_{\varphi}, t \right) & = & \frac{a}{2} ~ \chi(M) ~ t  ~ - ~ \sum_{k=1}^{\infty} \frac{L(\varphi^{k}) e^{-akt}}{k} ~ = ~   \frac{a}{2} ~ \chi(M) ~ t  ~ - ~ \log \zeta_{\varphi}(e^{-at}).
\end{eqnarray*}
In particular, if $\Dim M$ is odd and $\varphi$ is orientation preserving, then $\log T(M_{\varphi}, t) = 0$.
\end{theorem}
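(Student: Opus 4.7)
The plan is to apply Theorem \ref{Theorem:2.5} with $\lambda = t^{2}$, using the identity $\Delta^{q}_{M_{\varphi}}(t) = \Delta^{q}_{M_{\varphi}} + t^{2}$ from (\ref{E:3.9}). Substituting into the definition (\ref{E:3.10}) and forming the alternating sum $\frac{1}{2}\sum_{q}(-1)^{q+1}q \,(\cdots)$ will reduce the problem to two pieces: $\log T(M \times S^{1}(\frac{a}{2\pi}), t)$, and an alternating sum of logarithms of Fredholm determinants of the operators $\operatorname{Id} + \frac{2e^{a\sqrt{\widetilde{\Delta}^{q}_{M} + t^{2}}}}{(e^{a\sqrt{\widetilde{\Delta}^{q}_{M} + t^{2}}} - \operatorname{Id})^{2}}\bigl(\operatorname{Id} - \frac{\widetilde{\varphi}^{\ast}_{q} + (\widetilde{\varphi^{-1}})^{\ast}_{q}}{2}\bigr)$.

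For the product piece, I would invoke the M\"uller product formula (\ref{E:3.5}) to reduce $\log T(M \times S^{1}(\frac{a}{2\pi}), t)$ to $\frac{1}{2}\chi(M) \log \Det(\Delta_{S^{1}(a/2\pi)} + t^{2})$, using $\chi(S^{1}) = 0$, and then compute the circle zeta-determinant explicitly through the Mellin transform followed by Poisson summation (that is, Lemma \ref{Lemma:3.3}); this yields $at + 2\log(1 - e^{-at})$ and hence $\log T(M \times S^{1}(\frac{a}{2\pi}), t) = \frac{a}{2}\chi(M) t + \chi(M) \log(1 - e^{-at})$.

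The main work is the alternating sum of Fredholm determinants, which I would handle in three steps. First, I would split $\widetilde{\Delta}^{q}_{M} = \Delta^{q}_{M} \oplus \Delta^{q-1}_{M}$ and reindex so the sum collapses to $\sum_{q=0}^{m-1}(-1)^{q}\log\ddet_{Fr}\{\cdots\}$ acting on $\Omega^{q}(M)$. Next, following the same cancellation as in (\ref{E:3.3}), I would use the Hodge decomposition $\Omega^{q}(M) = \Omega^{q,-}(M) \oplus \mathcal{H}^{q}(M) \oplus \Omega^{q,+}(M)$: since $d$ intertwines $(\Delta^{q,+}_{M}, \varphi^{\ast,+}_{q})$ with $(\Delta^{q+1,-}_{M}, \varphi^{\ast,-}_{q+1})$, the contributions from $\mathcal{H}^{q}(M)^{\perp}$ telescope and vanish, leaving only the restriction to $\mathcal{H}^{q}(M)$. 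Finally, on $\mathcal{H}^{q}(M)$ the operator $\sqrt{\Delta^{q}_{M} + t^{2}}$ reduces to the scalar $t$, and I would use the key algebraic identity
\begin{equation*}
\operatorname{Id} + \frac{2 e^{at}}{(e^{at}-1)^{2}} \left(\operatorname{Id} - \frac{\varphi^{\ast}_{q} + (\varphi^{-1})^{\ast}_{q}}{2}\right) = \frac{(\operatorname{Id} - e^{-at}\varphi^{\ast}_{q})(\operatorname{Id} - e^{-at}(\varphi^{-1})^{\ast}_{q})}{(1 - e^{-at})^{2}},
\end{equation*}
which holds because $\varphi^{\ast}_{q}(\varphi^{-1})^{\ast}_{q} = \operatorname{Id}$. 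Taking $\log\ddet_{Fr}$ factors the expression into $\log\ddet_{Fr}(\operatorname{Id} - e^{-at}\varphi^{\ast}_{q}) + \log\ddet_{Fr}(\operatorname{Id} - e^{-at}(\varphi^{-1})^{\ast}_{q}) - 2 b_{q}\log(1 - e^{-at})$. Expanding each logarithm as $-\sum_{k \geq 1}\frac{e^{-akt}}{k}(\varphi^{\pm 1})^{\ast k}_{q}$, taking the alternating trace to form Lefschetz numbers, and invoking Lemma \ref{Lemma:3.2} (so $L(\varphi^{k}) = L(\varphi^{-k})$) gives $-\chi(M)\log(1 - e^{-at}) - \sum_{k \geq 1}\frac{L(\varphi^{k}) e^{-akt}}{k}$. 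The two $\chi(M)\log(1-e^{-at})$ contributions then cancel, delivering the first equality; the second is just the definition of $\log\zeta_{\varphi}(e^{-at})$.

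For the vanishing statement when $\dim M$ is odd and $\varphi$ preserves orientation, the coefficient $\frac{a}{2}\chi(M) t$ drops out since the Euler characteristic of a closed odd-dimensional manifold is zero. For the Lefschetz piece, Poincar\'e duality combined with orientation preservation gives $\Tr (\varphi^{k})^{\ast}_{q} = \Tr (\varphi^{k})^{\ast}_{\dim M - q}$, and pairing $q$ with $\dim M - q$ in $L(\varphi^{k}) = \sum_{q}(-1)^{q}\Tr(\varphi^{k})^{\ast}_{q}$ produces cancellation because $(-1)^{q} + (-1)^{\dim M - q} = 0$ when $\dim M$ is odd. The main obstacle is the factorization step above and verifying the Hodge-theoretic cancellation still operates after the Witten deformation; once that algebra is in hand, the reduction to $\chi(M)$ and $L(\varphi^{k})$ is essentially bookkeeping.
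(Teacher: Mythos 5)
Your proposal is correct and follows essentially the same route as the paper: Theorem \ref{Theorem:2.5} with $\lambda = t^{2}$, the M\"uller product formula together with the Poisson-summation computation of Lemma \ref{Lemma:3.3} for the product factor, the reindexing and Hodge-theoretic cancellation of (\ref{E:3.1})--(\ref{E:3.3}) to reduce the Fredholm-determinant sum to the harmonic part, and then the factorization $\Id + \frac{2e^{at}}{(e^{at}-1)^{2}}\bigl(\Id - \frac{\varphi^{\ast}_{q}+(\varphi^{-1})^{\ast}_{q}}{2}\bigr) = (1-e^{-at})^{-2}(\Id - e^{-at}\varphi^{\ast}_{q})(\Id - e^{-at}(\varphi^{-1})^{\ast}_{q})$ with Lemma \ref{Lemma:3.2}, exactly as in (\ref{E:3.15}). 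Your explicit statement of the factorization identity and of the Poincar\'e-duality argument for the vanishing claim only makes explicit what the paper leaves implicit.
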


\vspace{0.2 cm}
\noindent
{\it Remark :} In Theorem 4.9 of \cite{BH}, D. Burghelea and S. Haller obtained the result corresponding to Theorem 3.4 only when $\varphi : M \rightarrow M$ is a diffeomorphism. However, we give an elementary proof by using the BFK-gluing formula when $\varphi$ is an isometry.

\vspace{0.5 cm}


\end{document}